\theoremstyle{plain}
\newtheorem{theorem}{Theorem}[section]
\newtheorem*{theorem*}{Theorem}
\newtheorem{lemma}[theorem]{Lemma}
\newtheorem{proposition}[theorem]{Proposition}
\theoremstyle{remark}
\newtheorem{remark}[theorem]{Remark}
\numberwithin{equation}{section}
\theoremstyle{definition}
\newtheorem{definition}[theorem]{Definition}
\numberwithin{equation}{section}
\numberwithin{equation}{section}
\newcommand\quant{\advance\quantno by1
                      \ifnum\quantno=1\qquad\else\quad\fi\forall }
\newcommand\itemno[1]{(\romannumeral #1)}
\renewcommand\Re{\operatorname{\mathrm{Re}}}
\renewcommand\Im{\operatorname{\mathrm{Im}}}
\newcommand\set[1]{{\left\{#1\right\}}}
\renewcommand\mod[1]{\left\vert{#1}\right\vert}
\newcommand\bigmod[1]{\bigl\vert{#1}\bigr|}
\newcommand\norm[2]{{\Vert{#1}\Vert_{#2}}}
\newcommand\bignorm[2]{\left.{\bigl\Vert{#1}\bigr\Vert_{#2}}\right.}
\newcommand\opnorm[2]{|\!|\!| {#1} |\!|\!|_{#2}}
\newcommand\bigopnorm[2]{\bigl|\!\bigl|\!\bigl| {#1} 
\bigr|\!\bigr|\!\bigr|_{#2}}
\newcommand\Bigopnorm[2]{\Bigl|\!\Bigl|\!\Bigl| {#1} 
\Bigr|\!\Bigr|\!\Bigr|_{#2}}
\newcommand\prodo[2]{\left\langle#1,#2\right\rangle}
\newcommand\smallfrac[2]{\mbox{\small$\displaystyle\frac{#1}{#2}$}}
\newcommand\wrt{\,\text{\rm d}}
\newcommand\opL{\operatorname{\mathcal{L}}}
\newcommand\bD{\mathbf{D}}
\newcommand\bH{\mathbf{H}}
\newcommand\bS{\mathbf{S}} 
\newcommand\BC{\mathbb{C}}
\newcommand\BN{\mathbb{N}}
\newcommand\BR{\mathbb{R}} 
\newcommand\BZ{\mathbb{Z}}
\newcommand\cA{\mathcal{A}}  
\newcommand\cB{\mathcal{B}}  
\newcommand\cD{\mathcal{D}} 
\newcommand\cF{\mathcal{F}}  
\newcommand\cH{\mathcal{H}}  
\newcommand\cJ{\mathcal{J}}
\newcommand\cL{\mathcal{L}}  
\newcommand\fM{\mathfrak{M}} 
\newcommand\cO{\mathcal{O}}  
\newcommand\cP{\mathcal{P}}  
\newcommand\cS{\mathcal{S}} 
\newcommand\cT{\mathcal{T}}
\newcommand\al{\alpha}
\newcommand\be{\beta}
\newcommand\ga{\gamma}    
\newcommand\Ga{\Gamma}
\newcommand\de{\delta}
\newcommand\vep{\varepsilon}
\newcommand\la{\lambda}   
\newcommand\La{\Lambda}
\newcommand\om{\omega}    
\newcommand\vp{\varphi}
\newcommand\OV{\overline}
\newcommand\lu[1]{L^1(#1)}
\newcommand\lp[1]{L^p(#1)}
\newcommand\ld[1]{L^2(#1)}
\newcommand\ly[1]{L^\infty(#1)}
\newcommand\lorentz[3]{L^{#1,#2}(#3)}
\newcommand\hu[1]{H^1(#1)}
\newcommand\wh{\widehat}
\newcommand\wt{\widetilde}
\newcommand\ds{\displaystyle}
\newcommand\rmi{\hbox{\rm (i)}}
\newcommand\rmii{\hbox{\rm (ii)}}
\newcommand\rmiii{\hbox{\rm (iii)}}
\newcommand\ir{\int_{-\infty}^{\infty}}
\newcommand\ioty{\int_0^{\infty}}
\newcommand\dtt[1]{\,\frac{\mathrm {d} #1}{ #1}}
\newcommand\One{{\mathbf{1}}}
\newcommand\e{\mathrm{e}}
\newcommand\Horm{\mathrm{Horm}}
\newcommand\GK{{G/K}}
\newcommand\dest{\text{\rm d}}
\newcommand\bL{\mathbf{L}}
\newcommand\Ric{\mathop{\rm Ric}}
\newcommand\LL[1]{L^2(\Lambda^{#1}_\BC M)}
\begin{document}

\title[Weak type $1$ estimates]
{Estimates for functions 
of the Laplacian\\
on manifolds with bounded geometry}

\subjclass[2000]{47A60, 58C99}

\keywords{Spectral multipliers, Laplace--Beltrami operator,
noncompact manifolds, bounded geometry.}

\thanks{Work partially supported by the
Italian Progetto Cofinanziato ``Analisi Armonica'', 2008--2009.}

\author[G. Mauceri, S. Meda \and M. Vallarino]
{Giancarlo Mauceri, Stefano Meda and Maria Vallarino}

\address{Giancarlo Mauceri \\
Dipartimento di Matematica\\
Universit\`a di Genova\\
via Dodecaneso 35 \\
16146 Genova, Italy}
\email{mauceri@dima.unige.it}

\address{Stefano Meda: 
Dipartimento di Matematica e Applicazioni
\\ Universit\`a di Milano-Bicocca\\
via R.~Cozzi 53\\ I-20125 Milano\\ Italy}
\email{stefano.meda@unimib.it}

\address{Maria Vallarino:
Laboratoire de Math\'ema\-ti\-ques et Applications, 
Physique Math\'ema\-ti\-ques d'Orl\'eans\\
 Universit\'e d'Orl\'eans, UFR Sciences\\
B\^atiment de Math\'ematique-Route de Char\-tres\\
B.P. 6759\\ 45067 Orl\'eans cedex 2\\ France}
\email{maria.vallarino@unimib.it}

\begin{abstract}
In this paper we consider a complete connected noncompact 
Riemannian manifold $M$ with Ricci curvature bounded from below
and positive injectivity radius.  Denote by $\cL$
the Laplace--Beltrami operator on $M$.  We assume that the kernel
associated to the heat semigroup generated by $\cL$
satisfies a mild decay condition at infinity.  
We prove that if~$m$ is a bounded holomorphic function 
in a suitable strip of the complex plane, and satisfies
Mihlin--H\"ormander type conditions of appropriate order at infinity, 
then the operator $m(\cL)$ extends to an operator of weak type~$1$.
\par
This partially extends a celebrated result of 
J.~Cheeger, M.~Gromov and M.~Taylor,
who proved similar results under much stronger curvature
assumptions on $M$, but without any assumption on the decay
of the heat kernel.   
\end{abstract}

\maketitle

\setcounter{section}{0}
\section{Introduction} \label{s:Introduction}

The purpose of this paper is to extend a celebrated multiplier result 
of J.~Cheeger, M.~Gromov and M.~Taylor \cite[Thm~10.2]{CGT},
\cite[Thm~1.6]{T}, by substantially relaxing its geometric assumptions.

Suppose that $M$ is a complete connected noncompact Riemannian manifold.
Denote by~$-\cL$ the Laplace--Beltrami operator on $M$: 
$\cL$ is a symmetric operator on $C_c^\infty(M)$ (the space of
compactly supported smooth complex-valued functions on $M$).  Its closure
is a self adjoint operator on $\ld{M}$ which, with a slight abuse of notation,
we denote still by $\cL$. 
We denote by $b$ the bottom of the spectrum of $\cL$,
and by~$\{\cP_{\la}\}$ the 
spectral resolution of the identity for which   
$$
\cL f = \int_b^\infty \la \wrt \cP_{\la}f
$$
for every $f$ in the domain of $\cL$.  
For notational convenience, we denote by $\cD$ the
operator~$\sqrt{\cL-b}$.
\par
We say that $M$ has \emph{$C^\infty$ bounded geometry} if
the injectivity radius of $M$ is positive and the Riemann 
curvature tensor is bounded in the~$C^{\infty}$ topology.
We say that $M$ has \emph{bounded geometry} if
the injectivity radius of $M$ is positive and the Ricci
curvature is bounded from below.
 If $M$ has bounded geometry,
then there are nonnegative constants $\al$, $\be$ and $C$ such that
\begin{equation} \label{f: volume growth}
\mu\bigl(B(x,r)\bigr)
\leq C \, r^{\al} \, \e^{2\be \, r}
\quant r \in [1,\infty)\quad\forall x\in M,
\end{equation}
where $\mu\bigl(B(x,r)\bigr)$ denotes the Riemannian volume of the 
geodesic ball with centre $x$ and radius~$r$.  The same 
is \emph{a fortiori} true if $M$ has $C^\infty$ bounded geometry.   

For each ~$W$ in $\BR^+$, denote by $\bS_{W}$ the strip
$
\bigl\{z \in \BC: \Im{z} \in (-W,W) \bigr\}
$.
Taylor \cite[Thm~1.6]{T}, following up earlier work
of Cheeger, Gromov and Taylor \cite[Thm~10.2]{CGT}, proved that 
if $M$ has $C^\infty$ bounded geometry and
$m$ is a bounded even holomorphic function
in $\bS_{\be}$ satisfying estimates of the form
\begin{equation} \label{f: pseudodiff estimates 1} 
  \mod{D^{j} m(\zeta)}
\leq C\, \bigl(1+\mod{\zeta}\bigr)^{-j} 
\quant \zeta \in \bS_{\beta} \quant j \in \{0,1,\ldots, J\},
\end{equation}
where $J$ is a sufficiently large integer depending on the
dimension $n$ of $M$, then the operator~$m (\cD)$ is bounded
on $\lp{M}$ for $p$ in $(1,\infty)$, and of weak type~$1$.

In fact, the proof of this result requires
control only of a finite number of covariant
derivatives of the Riemann tensor, but this number is of the same
order of magnitude of the dimension of $M$.
Notice that \cite[Thm~1.6]{T}  
extends a previous result of R.J.~Stanton and P.A.~Tomas
\cite{StTo} in the case where $M$ is a symmetric space
of the noncompact type $\GK$ and real rank one
(and $\cL$ is the Laplace--Beltrami operator associated to
the $G$-invariant metric on~$M$ induced by the Killing form of $G$).
See also the pioneering
work of J.L.~Clerc and E.M.~Stein \cite{CS} on spherical multipliers
on noncompact symmetric spaces associated to complex semisimple
Lie groups, and recent related works \cite{A,I1,I2,MV} on general
noncompact symmetric spaces, which have been stimulated by 
\cite[Thm~1.6]{T} and \cite{CS}. 

As M.~Berger says in his book \cite[p.~291]{Be} ``Up to the
end of the 1980s, Ricci curvature was believed to be only
useful to control volumes,... ''.  Since then,
various geometric and analytic results on Riemannian manifolds have
been established under the hypothesis that the manifold is
of bounded geometry.  To mention a few, we recall the
relationship between isoperimetric
inequalities and the behaviour for large time
of the heat kernel \cite{Cou,ChF,V} 
and local Harnack type estimates for positive solutions
of the heat equation \cite{SC}.

In view of these considerations, 
it is natural to speculate whether \cite[Thm~1.6]{T} 
may be extended to Riemannian manifolds of bounded geometry.   
In this paper we assume that~$M$ has bounded geometry in
the sense specified above, but, for technical reasons, we
need also 
to assume that there exist constants $\rho> 1/2$ and $C$ such that 
\begin{equation} \label{f: assumption Ht}
\opnorm{\cH^t}{1;\infty}
\leq C \, \e^{-bt} \, t^{-\rho} \quant t \in [1,\infty),
\end{equation}
where $\{\cH^t\}$ denotes the heat semigroup generated by $\cL$,
and $\opnorm{\cH^t}{1;\infty}$ the 
norm of $\cH^t$ \emph{qua} operator from $\lu{M}$ to $\ly{M}$.
Note that on every manifold $M$ with bounded geometry estimate (\ref{f: assumption Ht}) holds, but with~$\rho =0$
(see, for instance, \cite[Section~7.5]{Gr}).  
Moreover it holds with $\rho>1$ on nonamenable unimodular Lie groups with a left invariant Riemannian metric \cite{Lo} and on noncompact Riemannian symmetric spaces \cite{CGM}.
\par
Our main result, Theorem~\ref{t: multiplier 2}, states that 
if $M$ is a Riemannian manifold 
of bounded geometry satisfying (\ref{f: assumption Ht}) with $\rho>1/2$, 
then the conclusion of Taylor's result holds with
$J> \max([\!\![n/2+1]\!\!] + 2, [\!\![n/2+1]\!\!] + 2+\al/2-\rho)$.  

To prove Theorem~\ref{t: multiplier 2} 
we decompose, as in \cite[Thm~1.6]{T}, the Schwartz kernel $k_{m(\cD)}$ of $m(\cD)$
as the sum of a kernel with support near the diagonal
in $M\times M$, and of a kernel supported off the diagonal.
As in \cite{T,CGT}, we show that the part near the diagonal
satisfies a H\"ormander type integral condition,
and that the part off the diagonal gives rise to a bounded
operator on $\lu{M}$.  However, the technical details are rather
different.  In particular, since we do not assume any control on
the derivatives of the Riemann tensor, we cannot use
either the eikonal equation or the Hadamard parametrix construction
to obtain the required estimates of $k_{m(\cD)}$ near the diagonal.
Our approach to these estimates is
based on ultracontractive bounds for the heat semigroup and 
for the restriction of the semigroup
generated by the de~Rham operator to $1$-forms on $M$ and uses an adaptation of L. H\"ormander's method \cite{Ho}.
We believe that our approach, though technically elaborate, helps to understand and clarify the relationships
between the heat semigroup and singular integral operators on $M$.  

We recall that the idea to use ultracontractive estimates
for the heat semigroup in the proof of multiplier results 
for its generator is not new (see, for instance, \cite{CoSi}), but,
to the best of our knowledge, it is indeed new in our setting.

Different endpoint estimates for various classes of multiplier operators
on manifolds with bounded geometry will be considered
in a forthcoming paper \cite{MMV}.  Those estimates will involve 
the Hardy space $\hu{M}$ introduced in \cite{CMM} 
and some related spaces, which will be defined in \cite{MMV}.

We will use the ``variable constant convention'', and denote by $C,$
possibly with sub- or superscripts, a constant that may vary from place to 
place and may depend on any factor quantified (implicitly or explicitly) 
before its occurrence, but not on factors quantified afterwards. 

\section{Notation, background material and preliminary results}
\label{s: New Hardy}

Suppose that $M$ is a connected $n$-dimensional Riemannian manifold
of infinite volume with Riemannian measure $\mu$.
We assume that $M$ has bounded geometry, i.e., that the injectivity
radius of
$M$ is positive and that 
\begin{equation}\label{f: Ricci}
\Ric(M)\ge-\kappa^2
\end{equation}
for some  $\kappa\ge0$. It is well known that manifolds with bounded geometry
satisfy the \emph{uniform ball size condition}, i.e. for every $r \in \BR^+$ 
\begin{equation}\label{ubc}
\inf\, \bigl\{ \mu\bigl(B(x, r)\bigr): x \in M \bigr\} > 0, \qquad
\sup\, \bigl\{ \mu\bigl(B(x, r)\bigr): x \in M \bigr\} < \infty
\end{equation}
(see, for instance, \cite{CMP}, where complete references are given). Moreover, by standard comparison theorems \cite[Theor. 3.10]{Ch}, the measure $\mu$ is \emph{locally doubling}, i.e. for every $R>0$ there exists a constant $C_R$ such that for every ball $B(x,r)$ such that $r<R$
$$
\mu\big(B(x,2r)\big)\le\,C_R\,\mu\big(B(x,r)\big).
$$
\par
If $\cT$ is a bounded linear operator from $L^p(M)$ to $L^q(M)$, we shall denote by~$\opnorm{\cT}{p;q}$ 
the operator norm of $\cT$ from $L^p(M)$ to $L^q(M)$.  In the
case where $p=q$, we shall simply write
$\opnorm{\cT}{p}$ instead of $\opnorm{\cT}{p;p}$. \par
Denote by $-\cL$ the Laplace--Beltrami operator
on $M$, 
by $b$ the bottom of the $\ld{M}$ spectrum of $\cL$,
and set $\be =
\limsup_{r\to\infty} \bigl[\log\mu\bigl(B(o,r)\bigr)\bigr]/(2r)$.
By a result of R.~Brooks \cite{Br}  $b\leq \be^2$.  
Denote by $\{\cH^t\}$ the semigroup generated by $-\cL$ on $\ld{M}$.
It is well known that for every $p$ in $[1,\infty)$,
the operator $\cH^t$ extends  
to a contraction on $\lp{M}$.
Furthermore, $\{\cH^t\}$ is ultracontractive, i.e., $\cH^t$
 maps $\lu{M}$ into $\ly{M}$ for every $t$ in $\BR^+$.
Recall that $\cH^t$ satisfies the following 
estimate \cite[Section~7.5]{Gr}  
\begin{equation} \label{f: special}
\bigopnorm{\cH^t}{1;2}
\leq C\, \e^{-bt} \, t^{-n/4} \, (1+t)^{n/4-\rho/2}  \quant t \in \BR^+
	\end{equation}
for some $\rho$ in $[0,\infty)$.  
Then, by standard subordination techniques of semigroups
\begin{equation} \label{f: ultrac bounds}
\bigopnorm{\e^{-t\cD}}{1;2} 
\leq 
C\, t^{-n/2}  \,(1+t)^{n/2-\rho}  \quant t \in \BR^+.
\end{equation}

The proof of our main result, Theorem~\ref{t: multiplier 2} below,
is rather technical and requires some
background material and a few preliminary results,
which are the content of the following three subsections.  
Specifically, 
Subsection~\ref{subsec: Background} gives information
about the heat semigroup generated by~$\cL$, and its natural
extension to forms, i.e., the semigroup generated by the 
de~Rham operator $\bL$,
Subsection~\ref{subsec: Kernel} and Subsection~\ref{subsec: Notation} contain
estimates for the kernels of certain functions
of the operator $\cD$
and some technical lemmata respectively.  These results
will be directly used in the proof of Theorem~\ref{t: multiplier 2}.

\subsection{The de~Rham operator} 
\label{subsec: Background}

Denote by $L^2(\La^k_\BC M)$ the space of square integrable 
$k$-forms on $M$ with complex coefficients, and by 
$C_c^\infty(\La^k_\BC M)$ the subspace of smooth forms with compact support. 
As usual we identify $0$-forms with functions on $M$,
and $L^2(\La^0_\BC M)$ with the space $L^2(M)$.   We shall denote by 
$\langle\cdot,\cdot\rangle_k$ the Hermitian inner product on 
$L^2(\La^k_\BC M)$, i.e.
$$
\langle\om,\eta\rangle_k
= \int_{M} \bigl( \om(y),\eta(y)\bigr)_y \wrt \mu(y),
$$
where $(\cdot,\cdot)_y$ is the 
Hermitian inner product  induced by the metric of 
$M$ on the complexification of the space of alternating tensors of order 
$k$ at the point $y$. 

We denote by $\dest$ the operator of exterior differentiation, 
considered as a closed densely defined operator from 
$L^2(\La^k_\BC M)$ to $L^2(\La^{k+1}_\BC M)$ and by 
$\de$ its adjoint operator, i.e. the closed densely defined operator mapping 
$L^2(\La^{k+1}_\BC M)$ to $L^2(\La^k_\BC M)$ such that
$$
\langle\dest \om,\eta\rangle_{k+1}
=\langle \om,\de\,\eta\rangle_k \quant \om\in C^\infty_c(\La^k_\BC M)
\quant \eta \in C^\infty_c(\La^{k+1}_\BC M).
$$
As a consequence, for each nonnegative integer $k$
the de~Rham operator $\de\dest+\dest\de$ maps 
smooth $k$-forms into smooth $k$-forms. 

We denote by $\bL$ the operator on 
$\bigoplus_{k=0}^\infty C^\infty_c(\La^{k}_\BC M)$, defined by
$$
\bL \om = (\de\dest+\dest\de) \om 
\quant \om \in C^\infty_c(\La^{k}_\BC M).
$$
With a slight abuse of notation, the closure of 
$\bL$ in $\bigoplus_{k=0}^\infty \ld{\La^{k}_\BC M}$
will be denoted still by~$\bL$.

It is well known that for each
nonnegative integer $k$,
the restriction of $\bL$ to $\ld{\La^{k}_\BC M}$ is a self adjoint operator
\cite{S}.
In particular,  the restriction of $\bL$ to $L^2(\La^0_\BC M)$ 
coincides with $\cL$.  
Furthermore, it is known that the restriction of $\bL$
to $1$-forms is nonnegative.  Therefore, the 
restriction of $-\bL$ to $\ld{\La^{1}_\BC M}$ generates a strongly continuous 
one parameter semigroup on $L^2(\La^1_\BC M)$ that 
we denote by $\{\bH^t\}$. 

The next lemma summarises some of the properties of the operator 
$\bL$ on $1$-forms that we shall need in the proof
of Proposition \ref{p:kernest}. 

\begin{lemma} \label{l: propLex} Suppose that $\kappa$ is as in (\ref{f: Ricci}).
Then
\begin{enumerate}
\item[\itemno1] for every bounded Borel function $F$ on $[0,\infty)$
$$
F(\cL)\de \om
= \de F(\bL) \om
\quant \om \in C^\infty_c(\La^{1}_\BC M);
$$
\item[\itemno2]  for every $\om$
in $\LL{1}$
$$
\mod{\bH^t \om(x)}_x
\le \e^{\kappa^2 t}\, \cH^t \mod{\om}(x) 
\quant t\in \BR^+ \quant x\in M;
$$
\item[\itemno3]
 for every $\om$
in $\LL{1}$
$$
\norm{\bH^t\om}{2}
\le \e^{(\kappa^2-b)t}\norm{\om}{2}
\quant t \in \BR^+.
$$
Hence the bottom of the spectrum of $\bL$ on 
$\LL{1}$ is greater than or equal to $b-\kappa^2$. 
\end{enumerate}
\end{lemma}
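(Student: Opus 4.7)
The plan is to dispatch the three parts separately, with (ii) being the technical heart and (i), (iii) essentially formal consequences of the setup.

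For part (i), I would first establish the algebraic intertwining $\cL\de = \de\bL$ on $C^\infty_c(\La^1_\BC M)$. This is immediate from $\de\circ\de = 0$: for such $\om$,
\[
\de\bL\om = \de(\de\dest+\dest\de)\om = \de\dest\de\om, \qquad \cL(\de\om) = \de\dest\de\om,
\]
since $\cL = \de\dest$ on smooth functions. From the intertwining of the generators one deduces the intertwining of the heat semigroups, $\cH^t\de = \de\bH^t$, by noting that both sides solve the heat equation in $\ld{M}$ with the same initial datum $\de\om$, so uniqueness (valid because both $\cL$ and the restriction of $\bL$ to $1$-forms are self adjoint) gives equality. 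The spectral theorem then upgrades this to $F(\cL)\de = \de F(\bL)$ for every bounded Borel $F$, e.g.~by approximating $F$ by linear combinations of exponentials $\e^{-t\la}$ via the Laplace transform and passing to the limit in the strong operator topology.

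For part (ii) my approach is to combine the Bochner--Weitzenb\"ock formula $\bL = \nabla^*\nabla + \Ric$ on $1$-forms with Kato's inequality $\mod{\nabla\mod{\om}}\le\mod{\nabla\om}$. Setting $u(t,x) = \mod{\bH^t\om}_x$ and $v(t,x) = \e^{\kappa^2 t}\,\cH^t\mod{\om}(x)$, a direct computation---rigorously carried out through the standard regularisation $u_\vep = (u^2+\vep)^{1/2}$ to avoid the zero set of $\om$, then letting $\vep\to 0$---yields
\[
(\partial_t + \cL)\,u \le \kappa^2\,u, \qquad (\partial_t + \cL)\,v = \kappa^2\,v,
\]
with $u(0,\cdot) = v(0,\cdot) = \mod{\om}$. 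A parabolic comparison argument (available in our setting because bounded Ricci from below together with the uniform ball size condition \eqref{ubc} implies stochastic completeness of $M$) then gives $u\le v$ pointwise, which is the claim. This is essentially the Hess--Schrader--Uhlenbrock domination of semigroups specialised to the de~Rham operator on $1$-forms.

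For part (iii) I would integrate the pointwise estimate from (ii):
\[
\norm{\bH^t\om}{2}^2 = \int_M \mod{\bH^t\om(x)}_x^{\,2}\wrt\mu(x) \le \e^{2\kappa^2 t}\,\norm{\cH^t\mod{\om}}{2}^{\,2} \le \e^{2(\kappa^2-b)t}\,\norm{\om}{2}^{\,2},
\]
the last inequality being the spectral bound $\opnorm{\cH^t}{2} = \e^{-bt}$. Taking square roots gives the stated operator norm estimate, and the lower bound on the spectrum follows from the standard identity $\inf\mathrm{spec}(\bL\vert_{\LL{1}}) = -\lim_{t\to\infty} t^{-1}\log\opnorm{\bH^t}{2}$ for self adjoint semibounded generators.

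The main obstacle is the rigorous justification of the pointwise semigroup domination in (ii): one must handle the singular set $\{\om = 0\}$ via the $\vep$-regularisation, verify the differential inequality in the distributional sense, and then run a maximum principle on a noncompact manifold. The hypothesis $\Ric(M) \ge -\kappa^2$ enters in two places---through the Weitzenb\"ock curvature term and implicitly through stochastic completeness---so it is essential to keep both in mind when writing out the details.
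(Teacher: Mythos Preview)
Your proposal is correct and follows essentially the same route as the paper: part~\rmi\ is deduced from the algebraic identity $\cL\de=\de\bL$ together with self-adjointness, part~\rmiii\ is obtained by integrating \rmii\ and using the spectral bound $\opnorm{\cH^t}{2}=\e^{-bt}$, and for part~\rmii\ the paper simply cites Bakry~\cite[Prop.~1.7]{Ba}, whose argument is precisely the Bochner--Weitzenb\"ock/Kato domination you have sketched.
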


\begin{proof} 
$\rmi$
The identity $F(\cL)\de =\de F(\bL)$ 
is a straightforward consequence of the identity $\cL\de\, \om
=\de\bL\,\om$ and of the fact that the operators $\cL$ and 
$\bL$ are self-adjoint.

For the proof of $\rmii$ see \cite[Prop. 1.7]{Ba};
now \rmiii\ follows directly from \rmii.
\end{proof}

Suppose that $\om$ is a smooth $1$-form with compact support. From Lemma 2.1, by using the contractivity of $\cH^t$ on $L^p(M)$ for every $p\in[1,\infty]$ and interpolation,
one may deduce that if $p$ is in $[1,\infty]$, then
$$
\bignorm{\bH^t \om}{p}
\leq \e^{\kappa^2\mod{1-2/p} t}\, \norm{\om}{p} \quant t\in \BR^+.
$$
Thus, $\{\bH^t\}$ extends to a semigroup on $L^p({\La^1_\BC M})$ for all $p$ in $[1,\infty)$,
that we denote still by~$\{\bH^t\}$.
  Fr{}om 
Lemma~\ref{l: propLex} and the ultracontractivity
estimates (\ref{f: special}) for $\cH^t$ we also deduce that 
$$
\begin{aligned}
\bignorm{\bH^t \om}{2}
&  =   \Bigl[\int_M \mod{\bH^t \om(x)}_x^2 \wrt \mu(x) \Bigr]^{1/2} \\
&  \le \e^{\kappa^2 t}\, \Bigl[\int_M 
     \bigl(\cH^t \mod{\om}(x)\bigr)^2 \wrt \mu(x) \Bigr]^{1/2} \\
&  \le C \, \e^{(\kappa^2-b) t}\,  t^{-n/4}\, (1+t)^{n/4-\rho/2} 
	 \, \norm{\om}{1} \quant t\in \BR^+.
\end{aligned}
$$
Thus there exists a constant $C$ such that 
\begin{equation}\label{ucLex}
\bigopnorm{\bH^t}{1;2}
\leq C\, \e^{(\kappa^2-b) t} \, t^{-n/4} \, (1+t)^{n/4-\rho/2}  
\quant t \in \BR^+.
\end{equation}

\subsection{Estimates for certain kernels} \label{subsec: Kernel}

For notational convenience, we denote by $\cD_1$
the operator $\sqrt{\cL-b+\kappa^2}$, and by $\bD_1$
the operator $\sqrt{\bL-b+\kappa^2}$ (the operator $\bL-b+\kappa^2$ is nonnegative by Lemma \ref{l: propLex}\rmiii).

If $\cT$ is an operator bounded on $\ld{M}$, then we
denote by $k_{\cT}$ its Schwartz kernel (with respect to the
Riemannian density $\mu$).
In this subsection, 
we prove estimates for $k_{F(t \cD)}$, $k_{F(t \cD_1)}$  
and of $\dest_2 k_{F(t \cD_1)}$, when the function~$F$ 
decays sufficiently fast at infinity; here $\dest_2$ denotes the differential with respect to the second variable. \par We observe that the only reason  to
introduce the de Rham operator $\bL$ and the auxiliary operator $\bD_1$ is that 
to estimate the kernel of 
$\dest_2 k_{F(t \cD_1)}$ we exploit the identity in Lemma \ref{p:kernest} \rmi.
\begin{proposition} \label{p:kernest}
Assume that $\rho$ is as in (\ref{f: assumption Ht}), 
that $\ga$ is in $(n/2+1,\infty)$, and that~$F$ 
is a bounded  function on $[0,\infty)$ such that
$$
\sup_{\la\in\BR^+}\, \mod{\big(1+{\la}\big)^{\ga} \, F(\la)}
< \infty.
$$
Then for every $t$ in $\BR^+$ the operators $F({t}\cD)$,
$F({t}\cD_1)$ and $\dest F({t}\cD_1)^*$ are bounded from 
$L^1(M)$ to $L^2(M)$.   Furthermore, there exists
a constant $C$ such that for all $t$ in $\BR^+$
\begin{enumerate}
\item[\itemno1]
$
\sup_{y\in M} \, \, \bignorm{k_{F(t\cD)}(\cdot,y)}{2} 
\leq C\, t^{-n/2}\, (1+t)^{n/2-\rho};
$
\item[\itemno2]
$
\sup_{y\in M} \, \, \bignorm{k_{F(t\cD_1)}(\cdot,y)}{2} 
\leq C\, t^{-n/2}\, (1+t)^{n/2-\rho};
$
\item[\itemno3]
$
\sup_{y \in M} \, \bignorm{\dest_2 k_{F(t\cD_1)}(\cdot,y)}{2}
\leq C\, t^{-n/2-1}\, (1+t)^{n/2+1-\rho},
$
where $\dest_2$ denotes exterior differentiation with respect to
the second variable. 
\end{enumerate}
\end{proposition}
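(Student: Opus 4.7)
The standard identification $\sup_{y \in M}\norm{k_\cT(\cdot,y)}{2} = \opnorm{\cT}{1;2}$, valid for every bounded $\cT\colon\lu{M}\to\ld{M}$, reduces $\rmi$ and $\rmii$ to bounds on $L^1$-to-$L^2$ operator norms (the same device, applied to $1$-forms in one slot and functions in the other, will handle $\rmiii$). Throughout, set $\Phi(\mu) = F(\mu)(1+\mu)^\ga$, which is bounded by the hypothesis on $F$.

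For $\rmi$, factor $F(t\cD) = \Phi(t\cD)\,(1+t\cD)^{-\ga}$. The spectral theorem gives $\opnorm{\Phi(t\cD)}{2;2} \le \norm{\Phi}{\infty}$, so it suffices to bound $\opnorm{(1+t\cD)^{-\ga}}{1;2}$. Use the Laplace representation $(1+t\cD)^{-\ga} = \Gamma(\ga)^{-1}\int_0^\infty s^{\ga-1}\e^{-s}\,\e^{-st\cD}\,ds$ and the ultracontractive bound~(\ref{f: ultrac bounds}); splitting the integral at $s = 1/t$ and using $\ga > n/2$ produces $C\,t^{-n/2}(1+t)^{n/2-\rho}$. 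Part~$\rmii$ is identical with $\cD_1$ in place of $\cD$, once one has the analogous estimate $\opnorm{\e^{-t\cD_1}}{1;2}\le C\,t^{-n/2}(1+t)^{n/2-\rho}$; this follows from the subordination formula $\e^{-t\cD_1} = \frac{t}{2\sqrt\pi}\int_0^\infty s^{-3/2}\e^{-t^2/(4s)}\e^{(b-\kappa^2)s}\cH^s\,ds$ combined with~(\ref{f: special}), the extra $\e^{-\kappa^2 s}$ factor only improving the bound.

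For $\rmiii$, first identify the kernel norm with an operator norm. Assuming $F$ real (the complex case decomposes into real and imaginary parts), $F(t\cD_1)$ is self-adjoint, so $k_{F(t\cD_1)}(x,y) = \overline{k_{F(t\cD_1)}(y,x)}$; a short duality argument together with Lemma~\ref{l: propLex}\,$\rmi$ (applied via functional calculus to $F(t\sqrt{\cdot - b + \kappa^2})$) gives $\sup_y \norm{\dest_2 k_{F(t\cD_1)}(\cdot, y)}{2} = \opnorm{F(t\cD_1)\,\de}{1;2} = \opnorm{\de\,F(t\bD_1)}{1;2}$. Next factor $\de F(t\bD_1) = \Phi(t\cD_1)\,\de\,(1+t\bD_1)^{-\ga}$, legitimate because Lemma~\ref{l: propLex}\,$\rmi$ yields $\de\,\Phi(t\bD_1) = \Phi(t\cD_1)\,\de$. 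Since $\Phi(t\cD_1)$ is $L^2$-bounded, it suffices to estimate $\opnorm{\de\,(1+t\bD_1)^{-\ga}}{1;2}$. Apply the Laplace representation and then scalar subordination to rewrite this as an iterated integral of $\opnorm{\de\,\bH^\tau}{1;2}$; estimate the latter by splitting $\de\,\bH^\tau = (\de\,\bH^{\tau/2})\,\bH^{\tau/2}$, where $\opnorm{\de\,\bH^{\tau/2}}{2;2}\le C\tau^{-1/2}$ comes from $\norm{\de\,\phi}{2}^2 \le \langle\bL\phi,\phi\rangle$ and spectral calculus, and $\opnorm{\bH^{\tau/2}}{1;2}$ is controlled by~(\ref{ucLex}). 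The extra $\tau^{-1/2}$ is exactly what produces the extra factor $t^{-1}$ in the final bound.

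The main technical obstacle is the bookkeeping in this last step: one must carefully combine the subordination kernels with the factors $\e^{(\kappa^2-b)\tau}$ and $\tau^{-n/4-1/2}(1+\tau)^{n/4-\rho/2}$ across the regimes $t, \tau \lessgtr 1$ to recover precisely $t^{-n/2-1}(1+t)^{n/2+1-\rho}$; the sign of $n/2-\rho$ being unspecified makes this slightly delicate. The hypothesis $\ga > n/2+1$ ensures convergence at the origin of all the $s$-integrals, even in~$\rmiii$ where the differentiation costs one more power.
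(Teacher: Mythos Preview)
Your treatment of \rmi\ and \rmii\ is correct and essentially the paper's argument: you factor through $(1+t\cD)^{-\ga}$ and the Poisson semigroup $\e^{-st\cD}$, whereas the paper factors through $(1+t^2\cD^2)^{-\ga/2}$ and the heat semigroup $\e^{-st^2\cD^2}$; either choice works, and the kernel identification via Dunford--Pettis is the same.

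For \rmiii\ there is a gap in the exponential bookkeeping. Your splitting $\de\,\bH^\tau=(\de\,\bH^{\tau/2})\,\bH^{\tau/2}$ together with the crude bound $\opnorm{\de\,\bH^{\tau/2}}{2;2}\le C\,\tau^{-1/2}$ yields only
\[
\opnorm{\de\,\bH^\tau}{1;2}\le C\,\e^{(\kappa^2-b)\tau/2}\,\tau^{-n/4-1/2}\,(1+\tau)^{n/4-\rho/2},
\]
that is, only \emph{half} of the exponential factor in~(\ref{ucLex}). When you substitute this into the subordination integral for $\de\,\e^{-u\bD_1}$, the factor $\e^{(b-\kappa^2)\tau}$ arising from $\e^{-\tau\bD_1^2}=\e^{(b-\kappa^2)\tau}\,\bH^\tau$ leaves a residual $\e^{(b-\kappa^2)\tau/2}$, and if $b>\kappa^2$ (as happens, for instance, on hyperbolic space $\BH^n$ with $n\ge 6$, where $b=(n-1)^2/4$ and $\kappa^2=n-1$) the $\tau$-integral diverges at infinity. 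The remedy is to sharpen the $L^2$--$L^2$ bound via Lemma~\ref{l: propLex}\,\rmiii: since the spectrum of $\bL$ on $1$-forms lies in $[b-\kappa^2,\infty)$, spectral calculus gives the extra decay $\opnorm{\bL^{1/2}\bH^{\tau/2}}{2;2}\le C\,\tau^{-1/2}\,\e^{-(b-\kappa^2)_+\tau/2}$, which restores the missing half-exponential.

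With that correction your route works, but it is more laborious than the paper's. There one observes directly that
\[
\norm{\de\,F(t\bD_1)\om}{2}^{2}\le\norm{\bD_1\,F(t\bD_1)\om}{2}^{2}+(b-\kappa^2)\,\norm{F(t\bD_1)\om}{2}^{2},
\]
so it suffices to bound $\opnorm{t\bD_1\,F(t\bD_1)}{1;2}$. This is handled by the \emph{same} resolvent factorisation as in \rmi, now with $\sigma=(\ga-1)/2$ (and here the hypothesis $\ga>n/2+1$ is used), avoiding the iterated subordination and the attendant exponential accounting altogether.
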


\begin{proof} 
We may assume that the kernels $k_{F(t\cD)}$ and $k_{F(t\cD_1)}$ 
are smooth. 
Indeed, it suffices to prove that  the desired estimates hold for  
all functions $G$  with bounded support such that 
$\mod{G}\le \mod{F}$, with a constant 
$C$  that does not depend on the support.  
Since for such functions the operator 
$\cL^N G(t\cD)$ is bounded on $L^2(M)$ for every positive integer 
$N$, its kernel is a smooth function on 
$M$, by elliptic regularity. The general case will follow by 
approximating~$F$ with functions of bounded support. 

First we prove \rmi. 
Suppose that $\sigma>n/4$.  Then, by (\ref{f: special}),
\begin{align}
\bigopnorm{(1+t^2\cD^2)^{-\sigma}}{1;2}
& \le    \smallfrac{1}{\Ga(\sigma)} \,\,  \Bigopnorm{{\ioty s^{\sigma} 
	 \, \e^{-s} \, \e^{-st^2\cD^2} {\dtt s}}}{1;2} \nonumber\\ 
& \leq C \int_0^{1/t^2} s^{\sigma} 
	  \, \e^{-s} \, (t^2s)^{-n/4} {\dtt s} \nonumber\\
&\qquad + C \int_{1/t^2}^\infty s^{\sigma} \,\e^{- s} 
	  \, (t^2s)^{-\rho/2}{\dtt s}\nonumber\\ 
& \leq C \, t^{-n/2}\, (1+t)^{n/2-\rho} 
	  \quant t \in \BR^+.\label{f: ultrac resolvent}
\end{align} 
By the spectral theorem and the assumption on $F$
$$
\sup_{t>0} \, 
\bigopnorm{(1+t^2\cD^2)^{\ga/2} \, F(t\cD)}{2} 
= \sup_{\la>0} \mod{(1+\la^2)^{\ga/2} \, F(\la)} 
< \infty. 
$$
Thus, by applying (\ref{f: ultrac resolvent}) with $\sigma
=\ga/2$, we get
\begin{align}
\bigopnorm{F(t\cD)}{1;2} 
& =    \bigopnorm{(1+t^2\cD^2)^{-\ga/2}
	\, (1+t^2\cD^2)^{\ga/2} \, F(t\cD)}{1;2} \nonumber \\
& \leq \bigopnorm{(1+t^2\cD^2)^{\ga/2} 
	\, F(t\cD)}{2}
	\, \bigopnorm{(1+t^2\cD^2)^{-\ga/2}}{1;2} \nonumber \\
& \leq C \,t^{-n/2}\, (1+t)^{n/2-\rho}
	  \quant t \in \BR^+.    \label{1;2}
\end{align}
Then the adjoint operator $F(t\cD)^*$ maps
$\ld{M}$ boundedly into $\ly{M}$ and
$$
\bigopnorm{F(t\cD)^*}{2;\infty} 
= \bigopnorm{F(t\cD)}{1;2}. 
$$
Thus, by Dunford-Pettis' Theorem \cite[Thm~6, p.~503]{DS},
the kernel $k_{F(t\cD)^*}$ of $F(t\cD)^*$ 
satisfies the estimate
\begin{align*}
\sup_{x\in M}\, \norm{k_{F(t\cD)^*}(x,\cdot)}{2}
& = \bigopnorm{F(t\cD)}{1;2} \\
& \leq  C \,t^{-n/2}\, (1+t)^{n/2-\rho}
	  \quant t \in \BR^+.
\end{align*}
Estimate \rmi\ follows from this and the fact that
$k_{F(t\cD)}(x,y) = \overline{k_{F(t\cD)^*}(y,x)}$.
\par
The proof of \rmii\ is, \emph{mutatis mutandis}, the same as 
the proof of \rmi.  We simply replace $\cD^2$ by $\cD_1^2$
in the proof of \rmi, and use the obvious ultracontractive bounds
for $\e^{-st^2\cD_1^2}$, instead of those for $\e^{-st^2\cD^2}$.

Finally we prove \rmiii.  By arguing as in the proof of \rmi\
(with $\bD_1$ in place of $\cD$, and using the ultracontractivity
estimates (\ref{ucLex}) in place of (\ref{f: special})),
we may show that there exists a constant $C$ such that 
\begin{equation} \label{m} 
\bigopnorm{F(t\bD_1)}{1;2}
\le C\,t^{-n/2} \, (1+t)^{n/2-\rho}
	\quant t \in \BR^+.
\end{equation}

We claim that there exists a constant $C$ such that
\begin{equation} \label{dm} 
\bigopnorm{\de\,F(t\bD_1)}{1;2}
\le C\,t^{-n/2-1}\, (1+t)^{n/2+1-\rho}
\quant t \in \BR^+.  
\end{equation}
To prove (\ref{dm}), observe that for every $\om$ in $C^\infty_c(\La^1_\BC M)$
\begin{align*}
\norm{\de\, F(t\bD_1)\,\om}{2}^{\hskip-5pt 2}
& =\langle\de \, F(t\bD_1)\,\om, \de\, F(t\bD_1)\,\om\rangle_0 \\ 
&\le \langle\de \, F(t\bD_1)\,\om, \de\, F(t\bD_1)\,\om\rangle_0 
    +\langle\dest \, F(t\bD_1)\,\om, \dest\, F(t\bD_1)\,\om\rangle_2 \\
& =\langle\bL \, F(t\bD_1)\,\om,  F(t\bD_1)\,\om\rangle_1 \\ 
& =\norm{ \bD_1\,F(t\bD_1) \om}{2}^{\hskip-5pt 2}
    +(b-\kappa^2) \, \norm{F(t\bD_1) \om}{2}^{\hskip-5pt 2}.
\end{align*}
This and (\ref{m}) imply that  
\begin{equation}\label{est1;2}
\begin{aligned}
\bigopnorm{\de\, F(t\bD_1)}{1;2}
& \le \bigopnorm{\bD_1\,F(t\bD_1)}{1;2}+b\,  \bigopnorm{F(t\bD_1)}{1;2} \\
& \le \bigopnorm{\bD_1\,F(t\bD_1)}{1;2} + C \, t^{-n/2}\, (1+t)^{n/2-\rho}
   \quant t \in \BR^+.
	\end{aligned}
\end{equation}
To conclude the proof of the claim it suffices to observe
that
\begin{align*}
\bigopnorm{t\bD_1\, F(t\bD_1)}{1;2} 
& =    \bigopnorm{(1+t^2\bD_1^2)^{-(\ga-1)/2}
	\, (1+t^2\bD_1^2)^{(\ga-1)/2} \, t\bD_1\, F(t\bD_1)}{1;2} \\
& \leq \bigopnorm{(1+t^2\bD_1^2)^{(\ga-1)/2} 
	\, t\bD_1\, F(t\bD_1)}{2}
	\, \bigopnorm{(1+t^2\bD_1^2)^{-(\ga-1)/2}}{1;2} \\
& \leq  C \, t^{-n/2}\, (1+t)^{n/2-\rho}
	\quant t \in \BR^+,
\end{align*}
where we have used (\ref{f: ultrac resolvent}) with $\sigma = (\ga-1)/2$.

Recall that $F(t\cD_1)\,\de =\de\,F(t\bD_1)$ by Lemma~\ref{l: propLex}~\rmi.
Thus, by (\ref{dm}), the operator $F(t\cD_1)\,\de$  
maps $L^1(\La^1_\BC M)$ to $L^2(M)$, and
$$
\bigopnorm{F(t\cD_1)\,\de}{1;2}
\le C\,t^{-n/2-1}\, (1+t)^{n/2+1-\rho} \quant t\in\BR^+.
$$
Hence its adjoint $\dest \,{F}(t\cD_1)^*$ 
maps $L^2(M)$ to $L^\infty(\La^1_\BC M)$ and 
$$
\bigopnorm{\dest\,{F}(t\cD_1)^*}{2;\infty}
\le C\,t^{-n/2-1}\, (1+t)^{n/2+1-\rho}.
$$
Thus, by Dunford-Pettis' Theorem, the kernel $k_{\dest F(t\cD_1)^*}$ 
of the operator $\dest\,{F}(t\cD_1)^*$ satisfies the estimate
$$
\sup_{y\in M}\, \bignorm{k_{\dest F(t\cD_1)^*}(y,\cdot)}{2} 
\le C\,t^{-n/2-1}\, (1+t)^{n/2+1-\rho}\quant t\in\BR^+.
$$
The desired conclusion follows because $\dest_2\,k_{F(t\cD_1)}(x,y)
= \overline{k_{\dest F(t\cD_1)^*}(y,x)}$.
\end{proof}

\subsection{Some technical lemmata} \label{subsec: Notation}

To motivate the technical result contained in this subsection, 
we briefly recall the main features of 
Taylor's method to prove spectral multiplier theorems for the 
Laplace--Beltrami operator on a Riemannian manifold $M$ of bounded geometry. 
Consider an operator of the form $m(\cD)$, where $m$ 
is an even, bounded, holomorphic function in the strip 
$\bS_{\be}$ and satisfies Mihlin-type conditions at infinity (see
(\ref{f: pseudodiff estimates 1})).
One of the main ingredients of Taylor's method is the  
functional calculus formula
\begin{equation}\label{f:taylor}
m(\cD)
= \frac{1}{2\pi} \,  \ir \wh {m}(t)\,\cos(t\cD)\wrt t,
\end{equation} 
based on the Fourier inversion formula and the spectral theorem.
The analysis of $m(\cD)$ ultimately relies on the finite propagation speed 
property for the wave equation and  uniform Sobolev estimates on 
$M$, proved in \cite{CGT} under rather strong bounded 
curvature assumptions on the manifold $M$. 
Since, in this paper, we want to relax the latter 
assumption by requiring only a lower bound on 
the Ricci curvature of $M$, we need to modify 
Taylor's proof.  
The aim of this section is to provide some of 
the required technical ingredients.

The first step consists in replacing the cosine in 
the right hand side of (\ref{f:taylor}) by a modified Bessel function
(see Lemma \ref{l: propertiesRiesz}~\rmii). 
For each $\nu\geq -1/2$, denote by $\cJ_\nu: \BR\setminus\{0\} \to \BC$
the modified Bessel function of order $\nu$, defined by
$$
\cJ_\nu(t) = \frac{J_\nu(t)}{t^\nu},
$$
where $J_\nu$ denotes the standard Bessel function of the first
kind and order $\nu$ (see, for instance, 
\cite[formula~(5.10.2), p.~114]{L}). We recall that, if $\Re \nu>-1/2$,
\begin{equation}\label{bessel}
J_\nu(t)=\frac{2^{1-\nu}}{\sqrt{\pi}\ \Gamma(\nu+1/2)}\ t^\nu\ \int_0^1(1-s^2)^{\nu-1/2}\,\cos(ts)\wrt s
\end{equation}
and that
$$
\cJ_{-1/2} (t)
= \sqrt{\frac{2}{\pi}}\  \cos t
\qquad \hbox{and}\qquad
\cJ_{1/2} (t)
= \sqrt{\frac{2}{\pi}}\ \frac{\sin t}{t}. 
$$

We recall the definition of the 
generalised Riesz means,  introduced in [CM, Section~1],
and summarise some of their properties.

Suppose that $d$ and $z$ are complex numbers such that 
$\Re d>0$ and that $\Re z >0.$  For every
$f$ in the Schwartz class $\cS(\BR)$, the
\emph{generalised Riesz mean of order} $(d,z)$ of $f$ is the 
function $R_{d,z}f,$ defined by
$$
R_{d,z}f (t) = \smallfrac{2}{\Ga(z)} \int_0^1 s^{d-1} \, 
(1-s^2)^{z-1} \, f(st) \wrt s
\quant t \in \BR.
$$
For fixed $d$ and $t$, the function $z\mapsto R_{d,z}f(t)$
has analytic continuation to an entire function. 
\par 
For every $f$ in $\lu{\BR}$ 
define its Fourier transform $\widehat f$ by
$$
\widehat f(t) = \ir f(s) \, e^{-ist} \wrt s
\quant t \in \BR.
$$
Sometimes we write $\cF f$ instead of $\wh f$, and denote by $\cF^{-1}f$
the inverse Fourier transform of~$f$.

Suppose that $f$ is a function on $\BR$, and that $\lambda$ is in $\BR^+$.
We denote by $f^\lambda$ and $f_\lambda$ the~$\lambda$-dilates of $f$, defined by
\begin{equation} \label{f: dilate} 
f^\lambda(x)
= f(\lambda x)  
\qquad\hbox{and}\qquad
f_\lambda(x)
= \lambda^{-1} \, f(x/\lambda)  \quant x \in \BR.
\end{equation}

For each positive integer $h$,
we denote  by $\cO^h$ the differential operator $t^h\, D^h$ on the real~line.

\begin{lemma}[{[CM]}] \label{l: propertiesRiesz old}
Suppose that $k$ is a positive integer, $d$ and $w$ are 
complex numbers, and $\Re d>0$.  The following hold:
\begin{enumerate}
\item[\itemno1]
if $\Re (d-2w)>0,$ then $R_{d-2w,w} \, R_{d,z} = R_{d-2w,w+z};$
\item[\itemno2]
$R_{d,0}$ is the identity operator;
\item[\itemno3]
there exist constants $C_{j,d,k}$ such that
$
R_{d,-k} = \sum_{j=0}^k C_{j,d,k} \, \cO^j.
$
\end{enumerate}
\end{lemma}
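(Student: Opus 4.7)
The plan is to pass to the Mellin transform on the half-line $(0,\infty)$, where each operator $R_{d,z}$ becomes multiplication by an explicit $\Ga$-ratio; all three statements then reduce to algebraic manipulations of symbols. First, for $f\in\cS(\BR)$ and $\Re z > 0$, I would compute the Mellin symbol of $R_{d,z}$ by interchanging the order of integration in the defining integral, substituting $r = ut$ to decouple the $u$- and $r$-integrals, and evaluating the remaining Beta integral $\int_0^1 u^{d-1-\xi}(1-u^2)^{z-1}\wrt u = \Ga((d-\xi)/2)\Ga(z)/[2\Ga((d-\xi)/2+z)]$. The upshot is
$$
M[R_{d,z}f](\xi) = m_{d,z}(\xi)\, Mf(\xi), \qquad m_{d,z}(\xi) := \frac{\Ga((d-\xi)/2)}{\Ga((d-\xi)/2+z)},
$$
valid for $\Re z > 0$ and extending meromorphically in $z$ to the whole complex plane. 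The negative half-line $(-\infty,0)$ is treated identically.

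Parts \rmi\ and \rmii\ then follow by symbol bookkeeping. For \rmi, the composition $R_{d-2w,w}R_{d,z}$ has Mellin symbol $m_{d-2w,w}(\xi)\,m_{d,z}(\xi)$, which telescopes through the intermediate $\Ga$-factor into $\Ga((d-\xi)/2 - w)/\Ga((d-\xi)/2 + z) = m_{d-2w,w+z}(\xi)$, and Mellin uniqueness gives the operator identity. For \rmii, one observes that $m_{d,0}(\xi)\equiv 1$, hence $R_{d,0} = I$.

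For \rmiii, the key observation is that $m_{d,-k}(\xi) = \Ga((d-\xi)/2)/\Ga((d-\xi)/2 - k) = \prod_{j=1}^k\bigl((d-\xi)/2 - j\bigr)$ is a polynomial in $\xi$ of degree $k$, while integrating by parts $j$ times shows that $\cO^j = t^j D^j$ has Mellin symbol $(-1)^j\, \xi(\xi+1)\cdots(\xi+j-1)$. Since the rising factorials $\{\xi(\xi+1)\cdots(\xi+j-1): 0\le j\le k\}$ form a basis of polynomials in $\xi$ of degree at most $k$, there exist unique constants $C_{j,d,k}$ with $m_{d,-k}(\xi) = \sum_{j=0}^k C_{j,d,k}\,(-1)^j\,\xi(\xi+1)\cdots(\xi+j-1)$, and Mellin inversion translates this into the claimed operator identity. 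The main obstacle I foresee is the analytic bookkeeping: justifying the Mellin inversion on $\cS(\BR)$, checking that the interchange of integrals and the analytic continuation in $z$ commute with the Mellin pairing, and treating the positive and negative half-lines in parallel. Once these standard verifications are in place, the proof is purely algebraic.
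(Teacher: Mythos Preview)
The paper does not prove this lemma itself; it simply refers to \cite[Section~1]{CM}.  Your Mellin-transform argument is correct: the symbol $m_{d,z}(\xi)=\Ga\bigl((d-\xi)/2\bigr)\big/\Ga\bigl((d-\xi)/2+z\bigr)$ is right, and the telescoping in \rmi, the identity $m_{d,0}\equiv 1$ in \rmii, and the rising-factorial basis change in \rmiii\ all go through as you describe.  The analytic details you flag are routine: for $f\in\cS(\BR)$ restricted to a half-line the Mellin transform is holomorphic in the strip $0<\Re\xi<\Re d$ (nonempty by hypothesis), and injectivity there recovers the pointwise operator identity; since both $z\mapsto R_{d,z}f(t)$ and $z\mapsto m_{d,z}(\xi)$ are entire, analytic continuation carries the identity from $\Re z>0$ to $z=-k$.

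The original treatment in \cite{CM} works directly with the defining integrals rather than through a multiplier calculus: \rmi\ by Fubini and the substitution $v=su$ followed by an Euler Beta integral, and \rmiii\ by integration by parts, which yields a recursion in $k$.  Your approach is essentially the same computation transported to Mellin coordinates---the symbol identity \emph{is} the Beta integral---but it has the merit of handling all three parts uniformly and of making the structure of \rmiii\ (a change of basis among degree-$k$ polynomials in the Mellin variable) completely transparent, at the mild cost of the half-line and continuation bookkeeping you mention.
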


\begin{proof}
The proofs of \rmi, \rmii, and \rmiii\ may be found in \cite[Section~1]{CM}.
\end{proof}

We shall make repeated use of the operator $R_{1+2k,-k}$.
For notational convenience, in the rest of this paper we shall 
write $R_k$ instead of $R_{1+2k,-k}$, and we shall denote the formal 
adjoint of $R_k$ by $R_k^*$.  Thus
$$
\ir f(t) \, R_kg(t) \wrt t
= \ir R_k^*f(t) \, g(t) \wrt t
\quant f,g \in \cS(\BR).
$$

\begin{lemma}\label{l: propertiesRiesz}
Suppose that $k$ is a positive integer.  The following hold:
\begin{enumerate}
\item[\itemno1]
if $g$ is a bounded smooth function 
with bounded derivatives up to the order~$k$, and $f$ is 
rapidly decreasing together with its derivatives up to the order~$k$,
then there exist constants $C_{h,k}^*$ such that
$$
\prodo{f}{R_{k}g}
= \sum_{h=0}^k  C_{h,k}^* \ir \cO^h f(t) \, g(t) \wrt t.
$$ 
Thus $R^*_k=\sum_{h=0}^k C_{h,k}^*\mathcal O^h$.
\item[\itemno2]
if $f$ is a tempered distribution such that 
$\cO^h f$ is in $\lu{\BR}\cap C_0(\BR)$ for all
$h$ in $\{0,1,\ldots, k\}$, then 
$$
\ir {f} (t) \, \cos (vt) \wrt t
= \sqrt\pi \, 2^{k-1/2}  
     \ir  R_k^*{f}(t)\,  \cJ_{k-1/2} (t v)  \wrt t.
$$
\end{enumerate}
\end{lemma}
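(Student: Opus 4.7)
The plan for part~\rmi\ is to invoke Lemma~\ref{l: propertiesRiesz old}~\rmiii, which represents $R_k=R_{1+2k,-k}$ as $\sum_{j=0}^k C_j\,\cO^j$ for suitable constants $C_j=C_{j,1+2k,k}$. Substituting gives $\prodo{f}{R_k g}=\sum_{j=0}^{k}C_j\int f(t)\,t^j g^{(j)}(t)\wrt t$, and in each summand I would integrate by parts $j$ times to move all derivatives from $g$ onto $t^j f(t)$. The boundary terms at~$\pm\infty$ vanish because $f,f',\ldots,f^{(k)}$ are rapidly decreasing while $g$ and its derivatives up to order~$k$ are bounded. Leibniz then gives $(t^j f)^{(j)}=\sum_{i=0}^j \binom{j}{i}\tfrac{j!}{i!}\,\cO^i f$; reindexing and collecting the coefficient of $\cO^h f$ produces the sought constants $C^*_{h,k}$ and the operator identity $R_k^*=\sum_h C^*_{h,k}\cO^h$.

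For part~\rmii\ I would proceed in two steps. First, I establish the \emph{pointwise} identity $\cos(vt)=\sqrt{\pi}\,2^{k-1/2}\,R_k[\cJ_{k-1/2}(\cdot v)](t)$. From the integral representation~(\ref{bessel}) with $\nu=k-1/2$ one computes $\sqrt{\pi}\,2^{k-1/2}\,\cJ_{k-1/2}(tv)=R_{1,k}[\cos(v\cdot)](t)$, since both sides equal $\tfrac{2}{(k-1)!}\int_0^1(1-s^2)^{k-1}\cos(tvs)\wrt s$. Applying $R_k$ in the variable~$t$ and invoking Lemma~\ref{l: propertiesRiesz old}~\rmi~and~\rmii\ with $d=1$, $w=-k$, $z=k$ yields the semigroup identity $R_{1+2k,-k}R_{1,k}=R_{1+2k,0}=I$, whence the pointwise identity follows.

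Second, since $f\in\lu{\BR}$ (from $\cO^0f\in\lu{\BR}$) and $\cos(v\cdot)$ is bounded, pairing the pointwise identity with $f$ gives $\int f(t)\cos(vt)\wrt t=\sqrt{\pi}\,2^{k-1/2}\int f(t)\,R_k[\cJ_{k-1/2}(\cdot v)](t)\wrt t$. To match the right-hand side of the claim, I would start from $\int R_k^*f(t)\,\cJ_{k-1/2}(tv)\wrt t$, expand $R_k^*f=\sum_{h=0}^k C^*_{h,k}\,\cO^h f$, and integrate each summand $\int t^h f^{(h)}(t)\,\cJ_{k-1/2}(tv)\wrt t$ by parts $h$ times, shifting derivatives off $f$ onto $t^h\,\cJ_{k-1/2}(\cdot v)$. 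Using $(\cO^h)^{*}=(-1)^h D^h t^h$ and $R_k^{**}=R_k$, the reassembled sum equals $R_k[\cJ_{k-1/2}(\cdot v)]$ pointwise, closing the chain of equalities.

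The main obstacle is the control of the boundary terms in the integration by parts of the second step. These have the schematic form $f^{(h-m)}(t)\,(t^h\,\cJ_{k-1/2}(\cdot v))^{(m-1)}(t)$ evaluated at $t=\pm\infty$. The saving grace is the Bessel asymptotics $\cJ_{k-1/2}(tv)=O(|tv|^{-k})$, which hold also for each $t$-derivative and yield $(t^h\,\cJ_{k-1/2}(\cdot v))^{(m-1)}(t)=O(t^{h-k})$, hence $O(1)$ for $h\le k$. Combined with $f^{(h-m)}(t)\to 0$ at infinity (which follows from $t^{h-m}f^{(h-m)}\in C_0$ when $h>m$, and directly from $f\in C_0$ when $h=m$), every boundary term tends to zero. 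Keeping track of this bookkeeping across all intermediate stages of the $h$-fold integration by parts is the only technically delicate piece of the argument.
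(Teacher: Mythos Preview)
Your proof is correct and follows essentially the same route as the paper: part~\rmi\ is identical (Lemma~\ref{l: propertiesRiesz old}~\rmiii, integration by parts, Leibniz, collect coefficients), and for part~\rmii\ the paper likewise first establishes the pointwise identity $\cos(vt)=\sqrt{\pi}\,2^{k-1/2}R_k[\cJ_{k-1/2}(\cdot v)](t)$ via $R_{1,k}C^v=\sqrt{\pi}\,2^{k-1/2}\cJ_{k-1/2}^v$ together with $R_kR_{1,k}=R_{1+2k,0}=I$, and then transfers to $R_k^*f$ by the adjoint relation from~\rmi. The only difference is that the paper simply writes ``follows from~\rmi'' at this last step, whereas you redo the integration by parts explicitly and verify the boundary terms using the Bessel decay $\cJ_{k-1/2}(s)=O(s^{-k})$ and $\cO^{h-m}f\in C_0$; this is in fact a point the paper glosses over, since the hypotheses on $f$ in~\rmii\ are weaker than those in~\rmi, so your extra care is warranted and correct.
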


\begin{proof}
First we prove \rmi.  By using Lemma \ref{l: propertiesRiesz old} \rmiii, and then integrating by parts
$$
\begin{aligned}
\prodo{f}{R_{k} g}
& =  \sum_{j=0}^k  C_{j,1+2k,k} \ir f(t) \, \cO^j g(t) \wrt t \\
& =  \sum_{j=0}^k  C_{j,1+2k,k} (-1)^j 
		\ir D^j\bigl(t^j \, f\bigr)(t) \ g(t) \wrt t.
\end{aligned}
$$
Define 
$\ds
a_{j,\ell}
= {j\choose \ell}\frac{j!}{\ell!}.
$
By Leibniz's r\^ule
$
D^j\bigl(t^j \, f\bigr)(t)
=  \sum_{\ell=0}^j a_{j,\ell} \, \cO^{j-\ell} f (t).
$
Then
$$
\begin{aligned}
\prodo{f}{R_{k} g}
& =  \sum_{j=0}^k  C_{j,1+2k,k} (-1)^j 
	   \sum_{\ell=0}^j a_{j,\ell}
	   \ir \cO^{j-\ell}f (t) \, g(t) \wrt t \\ 
& =  \sum_{j=0}^k  C_{j,1+2k,k} (-1)^j 
	   \sum_{h=0}^j a_{j,j-h}
	   \ir \cO^{h}f (t) \, g(t) \wrt t \\ 
& =  \sum_{h=0}^k  C_{h,k}^* \ir \cO^h f(t) \ g(t) \wrt t, 
\end{aligned}
$$
where $C_{h,k}^* = \sum_{j=h}^k (-1)^j \,C_{j,1+2k,k}\, a_{j,j-h}$,
as required.

Next we prove \rmii.
For every $v$ in~$\BR^+$, denote by $C^v$ the function
$
C^v (t) = \cos(tv).
$
The required formula follows from \rmi, once we prove that 
$$
C^v  = \sqrt\pi \, 2^{k-1/2} \, R_{k}\bigl(\cJ_{k-1/2}^v).
$$
To prove this formula, observe that for every positive integer $k$
$$
\begin{aligned}
\bigl(R_{1,k} C^v\bigr)(t)
&  = \smallfrac{2}{\Ga(k)} \int_0^1 (1-s^2)^{k-1} \, \cos(stv) \wrt s \\
&  = \sqrt\pi \, 2^{k-1/2} \, \cJ_{k-1/2} (t v)
\end{aligned}
$$
by (\ref{bessel}).
Then we use Lemma~\ref{l: propertiesRiesz old}~\rmi\ and \rmii, and write
$$
\sqrt\pi \, 2^{k-1/2} \, R_{k}\bigl(\cJ_{k-1/2}^v)
= R_{k}\bigl(R_{1,k}C^v\bigr) = R_{1+2k,0}C^v = C^v,
$$
as required.
\end{proof}

In the rest of this section we shall provide various 
estimates of  functions of the form $R_k^*\wh{g}$ that, 
in combination with Lemma~\ref{l: propertiesRiesz}~\rmi, 
will be needed in the proof of Theorem~\ref{t: multiplier 2}.

Suppose that $J$ is a positive number.
Denote by $\vp:\BR \to [0,1]$ a smooth even function,
supported in $[-4,-1/4] \cup [1/4,4]$, 
equal to one on $[-2,-1/2] \cup [1/2,2]$, and such that
$\sum_{j\in \BZ} \vp^{2^{-j}} = 1$ on $\BR\setminus \{0\}$.
We denote by $H^J(\BR)$ the standard Sobolev space on $\BR,$ modelled
over $\ld\BR$.  

\begin{definition}
We say that a function $g:\BR\to \BC$ satisfies a
\emph{H\"ormander condition}~\cite{Ho} of order $J$ on the real line if 
\begin{equation} \label{f:Hormandercondition}
\sup_{\lambda>0} \norm{\vp\, g^\lambda}{H^J(\BR)} 
< \infty.
\end{equation}
We set $\norm{g}{\Horm(J)} := \sup_{\lambda>0} \norm{\vp\, g^\lambda}{H^J(\BR)}$.
\end{definition}

Note that (\ref{f:Hormandercondition}) implies that 
$\norm{g}{\infty} \le 2\, \norm{g}{\Horm(J)}$ if $J>1/2$.
We need a technical lemma, which is a version of H\"ormander's method \cite{Ho}.

\begin{lemma} \label{l: technical}
Suppose that $s$ is in $(1/2,\infty)$, that $k$ is a positive integer
and that $g: \BR\to\BC$ is a bounded even function that extends to an 
entire function of exponential type $1$.
For each integer~$j$ define the functions $g_j$ by
$$
g_j = g\, \vp^{2^{-j}}.
$$
Then $\wh{g_j}$ is an entire function of exponential type and 
$\wh g = \sum_j \wh g_j$ in the sense of distributions.
Furthermore, for every $\vep$ in $[0,s-1/2)$
there exists a constant $C$ such that for all $r$ in $\BR^+$
\begin{enumerate}
\item[\itemno1]
$\ds
\int_{\mod{t}>r} \mod{R_k^*\wh{g_j}(t)} \wrt t 
\leq  C \, (2^j\, r)^{-\vep}\, \norm{g}{\Horm(s+k)}
$;
\item[\itemno2]
$ \ds 
r\, \int_{\mod{t} > r} \frac{\mod{R_k^*\wh{g_j}(t)}}{\mod{t}} \wrt t 
\leq C \,  (2^j\, r)^{1/2} \, \norm{g}{\Horm(k)}$;
\vskip5truept
\item[\itemno3]
$ \ds 
\sup_{j\in \BZ} \, \norm{\wh{g_j}}{1} 
\leq C \, \norm{g}{\Horm(s)}.
$
\end{enumerate}
\end{lemma}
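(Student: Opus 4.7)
The plan is to combine a dyadic rescaling with a Cauchy--Schwarz/Plancherel argument in the spirit of H\"ormander. For each $j$ set $\tilde g_j:=\varphi\cdot g^{2^j}$, so that $g_j(u)=\tilde g_j(2^{-j}u)$ and, by the dilation rule of the Fourier transform, $\widehat{g_j}(t)=2^j\,\widehat{\tilde g_j}(2^j t)$. The operator $\cO=tD$ commutes with dilations in the sense that $\cO[F(\lambda\,\cdot)](t)=(\cO F)(\lambda t)$, hence so does every $\cO^h$, and by Lemma~\ref{l: propertiesRiesz}\rmi\ this propagates to
$$
R_k^*\widehat{g_j}(t)=2^j\,(R_k^*\widehat{\tilde g_j})(2^j t).
$$
Changing variables $s=2^j t$ in \rmi, \rmii, \rmiii\ reduces all three estimates to bounds for $\widehat{\tilde g_j}$ on the set $\{|s|>R\}$ with $R:=2^j r$. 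Crucially, $\tilde g_j$ is supported in the fixed compact set $\{1/4\le |u|\le 4\}$, and by the very definition of the H\"ormander norm $\|\tilde g_j\|_{H^\alpha(\BR)}\le \|g\|_{\Horm(\alpha)}$ uniformly in $j$.

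The two preliminary assertions come for free: since $g_j\in C_c^\infty(\BR)$, Paley--Wiener guarantees that $\widehat{g_j}$ is entire of exponential type; and since the partition of unity $\sum_j\varphi^{2^{-j}}\equiv 1$ on $\BR\setminus\{0\}$ is locally finite with at most four nonzero terms at each point, testing against an arbitrary Schwartz function and applying dominated convergence yields $\widehat g=\sum_j\widehat{g_j}$ in $\cS'(\BR)$.

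For the three estimates, expand $R_k^*$ as a linear combination of operators $t^\ell D^\ell$ with $0\le\ell\le k$ (which is possible since $\cO^h=\sum_\ell S(h,\ell)\,t^\ell D^\ell$ by the Stirling identity). Each summand satisfies, by Plancherel, $\|(1+t^2)^{\alpha/2}D^\ell\widehat{\tilde g_j}\|_{2}=c\,\|u^\ell\tilde g_j\|_{H^\alpha}$, and since $\tilde g_j$ has support in a fixed compact set the multiplier $u^\ell$ is bounded on $H^\alpha$, giving $\|u^\ell\tilde g_j\|_{H^\alpha}\le C\|g\|_{\Horm(\alpha)}$. For \rmiii, apply Cauchy--Schwarz with weight $(1+t^2)^{-s/2}$, whose $L^2$ norm is finite precisely because $s>1/2$; this yields $\|\widehat{\tilde g_j}\|_1\le C\|g\|_{\Horm(s)}$. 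For \rmi, apply Cauchy--Schwarz on $\{|t|>R\}$ with weight $(1+t^2)^{-\alpha/2}$ where $\alpha:=k+1/2+\vep$; the weight integral has square root $\le CR^{\ell-\alpha+1/2}\le CR^{-\vep}$ for $R\ge 1$ and is uniformly bounded for $R\le 1$ (hence still $\le CR^{-\vep}$ since $\vep\ge 0$), while the hypothesis $\vep<s-1/2$ ensures $\alpha<s+k$, so the Sobolev factor is controlled by $\|g\|_{\Horm(s+k)}$. For \rmii, the analogous Cauchy--Schwarz with weight $(1+t^2)^{-\ell/2}$ produces a weight integral with square root $\le CR^{-1/2}$, combined with $\|u^\ell\tilde g_j\|_{H^\ell}\le C\|g\|_{\Horm(k)}$.

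The main technical care lies in correctly tracking the factor $2^j$ through $R_k^*$---i.e.\ the identity $R_k^*\widehat{g_j}(t)=2^j(R_k^*\widehat{\tilde g_j})(2^j t)$---and in choosing, for each of \rmi, \rmii, \rmiii, Cauchy--Schwarz exponents jointly compatible with integrability of the weight and with the advertised H\"ormander order; once these choices are made, the three estimates follow from direct computation.
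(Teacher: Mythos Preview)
Your argument is correct and follows essentially the same route as the paper: rescale via $\widehat{g_j}=[\cF(\varphi\,g^{2^j})]_{2^{-j}}$ so that $\cO^\ell\widehat{g_j}$ inherits the dilation, change variables to $\{|s|>2^jr\}$, and then apply Cauchy--Schwarz/Plancherel (the paper calls the case of \rmi\ ``Bernstein's theorem,'' which is the same Cauchy--Schwarz with weight $(1+t^2)^{-s/2}$). One small slip: in the paper $\cO^h$ is \emph{defined} as $t^hD^h$, not $(tD)^h$, so the Stirling expansion is unnecessary---Lemma~\ref{l: propertiesRiesz}\rmi\ already gives $R_k^*=\sum_h C^*_{h,k}\,t^hD^h$ directly.
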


\begin{proof}
For every integer $\ell$ in $\{0,\ldots,k\}$
define the tempered distribution $G^\ell$ and the functions $G_j^\ell$ by
$$
G^\ell = \cO^\ell \wh g,
\qquad
\qquad \hbox{and}\qquad
G_j^\ell = \cO^\ell \wh{g_j}.
$$
By Lemma \ref{l: propertiesRiesz} \rmi, $R_k^*\wh{g_j} = \sum_{\ell=0}^k C_{\ell,k}^* \, \cO^\ell\wh{g_j}$.
Thus, to prove \rmi\ and \rmii\ it suffices to prove similar estimates
with $G_j^\ell$ in place of $R_k^*\wh{g_j}$ for all $\ell$
in $\{0,\ldots, k\}$.
\par
Note that both $\wh{g_j}$ and $G_j^\ell$ are entire functions 
of exponential type $2^{j+2}$.  Observe that
$$
\wh{g_j} = \cF  \bigl[(g^{2^j} \vp)^{2^{-j}} \bigr]
= \bigl[\cF  (g^{2^j} \vp)\bigr]_{2^{-j}}\ .
$$
Hence
\begin{equation}
G_j^\ell 
 = \cO^\ell \bigl\{ \bigl[\cF  (g^{2^j} \vp)\bigr]_{2^{-j}}\bigr\}
 = \bigl[\cO^\ell \cF  (g^{2^j} \vp)\bigr]_{2^{-j}}.
\end{equation}
By elementary Fourier analysis
$
\cF^{-1} \bigl[\cO^\ell\, \cF (g^{2^j}\, \vp)\bigr] (\xi)
= (-1)^{\ell} \, D^\ell\bigl[ \xi^\ell \,g^{2^j} \,\vp \bigr](\xi).
$

Now we prove \rmi.  Note that
\begin{equation}
\begin{aligned} \label{f: real est I}
\int_{\mod{t}>r} \mod{G_j^\ell(t)} \wrt t 
& =    \int_{\mod{t}>r} \bigmod{\bigl[\cO^\ell\cF (g^{2^j} \vp)
	\bigr]_{2^{-j}}(t)} \wrt t \\
& =    \int_{\mod{t}>2^j\, r} \bigmod{\cO^\ell\cF (g^{2^j} \vp)(t)} \wrt t \\
& \leq   (2^j\, r)^{-\vep}\,
 \int_{\BR} \mod{t}^{\vep}\, \bigmod{\cO^\ell\cF (g^{2^j} \vp)(t)} \wrt t \\
& \leq C\,   (2^j\, r)^{-\vep}\, \bignorm{\cF^{-1} \cO^\ell\cF (g^{2^j} \,
	 \vp)}{H^s(\BR)}
\end{aligned}
\end{equation}
by the classical Bernstein's Theorem,
where $\vep$ is in $[0,s-1/2)$.  Note that $C$ depends on $\vep$
but is independent of $j$.
Now, by Plancherel's Theorem,
$$
\begin{aligned}
\norm{\cF^{-1} \bigl[\cO^\ell\, \cF (g^{2^j}\, \vp)\bigr]}{H^s(\BR)}\raise5pt\hbox{\hskip-25pt$^2$}\hskip25pt
& =    \int_{\BR} \bigmod{\cO^\ell\cF \bigl[g^{2^j} \,\vp \bigr](t)}^2
       \, \bigl(1+\mod{t}^2\bigr)^s \wrt t \\
& \leq \int_{\BR} \bigmod{D^\ell\cF \bigl[g^{2^j} \,\vp \bigr](t)}^2
       \, \bigl(1+\mod{t}^2\bigr)^{s+\ell} \wrt t \\
& =    \int_{\BR} \bigmod{\cF \bigl[\xi^\ell \, g^{2^j} \,\vp \bigr](t)}^2
       \, \bigl(1+\mod{t}^2\bigr)^{s+\ell} \wrt t. 
\end{aligned}
$$
The square root of the last integral is a constant times 
$\norm{\xi^\ell \,g^{2^j} \,\vp}{H^{s+\ell}(\BR)}$, which is clearly dominated
by  $\norm{g^{2^j} \,\vp}{H^{s+\ell}(\BR)}$.
Then (\ref{f: real est I}) implies that
\begin{equation}
\begin{aligned} \label{f: real est III}
\int_{\mod{t}>r} \mod{G_j^\ell(t)} \wrt t 
& \leq  C \, (2^j\, r)^{-\vep}\, \norm{g}{\Horm(s+\ell)},
\end{aligned}
\end{equation}
as required to conclude the proof of \rmi.
\par
Next we prove \rmii.
Observe that
\begin{equation}
\begin{aligned}
r\, \int_{\mod{t} > r} \frac{\mod{G_j^\ell(t)}}{\mod{t}} \wrt t 
& =    r\, \int_{\mod{t} > r} \frac{\bigmod{\bigl[\cO^\ell \cF (g^{2^j} \,
       \vp)\bigr]_{2^{-j}}(t)}}{\mod{t}} \wrt t \\
& =   2^{j}\,  r\, \int_{\mod{t} > 2^j\,r} \frac{\bigmod{\cO^\ell 
       \cF (g^{2^j} \, \vp)(t)}}{\mod{t}} \wrt t \\
& \leq  2^{j}\,  r\, \Bigl(\int_{\mod{t} > 2^j\,r} 
       \mod{t}^{-2} \wrt t\Bigr)^{1/2} \,
       \bignorm{\cO^\ell \cF (g^{2^j} \, \vp)}{2} \\
& \leq C \,  \bigl(2^{j}\, r\bigr)^{1/2} \, \norm{g^{2^j} \, \vp}{H^\ell(\BR)} \\
& \leq C \,  (2^j\, r)^{1/2} \, \norm{g}{\Horm(\ell)},
\end{aligned}
\end{equation}
as required.\par
The inequality \rmiii \ follows from \rmi \  by taking $k=\vep=0$.
\end{proof}

\begin{remark} \label{rem: technical} 
Notice the following variant of Lemma~\ref{l: technical}~\rmii\
that will be used in the proof of Theorem~\ref{t: multiplier 2}~\rmi\ below.  
For every $\eta$ in $(1/2,1]$ and for every $R$ in $\BR^+$
there exists a constant $C$ such that 
\begin{equation} \label{f: ultima}
r\, \int_{\mod{t} > r} \frac{\mod{R_k^*\wh{g_j}(t)}}{\mod{t}^{\eta}} \wrt t 
\leq C \,  (2^j\, r)^{1/2} \, \norm{g}{\Horm(k)}
\quant r \in (0,R].
\end{equation}
The proof is much the same as the proof of Lemma \ref{l: technical}  \rmii. 
As before, it suffices to prove (\ref{f: ultima}) with $G_j^\ell$ in place 
of $R_k^*\wh{g_j}$ for all $\ell$ in $\{0,\ldots, k\}$.

Observe that
\begin{equation}
\begin{aligned}
r\, \int_{\mod{t} > r} \frac{\mod{G_j^\ell(t)}}{\mod{t}^{\eta}} \wrt t 
& =    r\, \int_{\mod{t} > r} \frac{\bigmod{\bigl[\cO^\ell \cF (g^{2^j} \,
       \vp)\bigr]_{2^{-j}}(t)}}{\mod{t}^{\eta}} \wrt t \\
& =   2^{\eta j}\,  r\, \int_{\mod{t} > 2^j\,r} \frac{\bigmod{\cO^\ell 
       \cF (g^{2^j} \, \vp)(t)}}{\mod{t}^{\eta}} \wrt t \\
& \leq  2^{\eta j}\,  r\, \Bigl(\int_{\mod{t} > 2^j\,r} 
       \mod{t}^{-2\eta} \wrt t\Bigr)^{1/2} \,
       \bignorm{\cO^\ell \cF (g^{2^j} \, \vp)}{2} \\
& \leq C \,  2^{j/2}\, r^{3/2-\eta} \, \norm{g^{2^j} \, \vp}{H^\ell(\BR)} \\
& \leq C \,  (2^j\, r)^{1/2} \, \norm{g}{\Horm(\ell)},
\end{aligned}
\end{equation}
as required.  Note that in the last inequality
we have used the fact that $r$ varies in a bounded~set.
\end{remark}

\section{Spectral multipliers on Riemannian manifolds}  
\label{s: Spectral multipliers on Riemannian manifolds}

In this section we prove our main result, Theorem~\ref{t: multiplier 2}.
To treat the part of the kernel $k_{m(\cD)}$ near the diagonal of $M\times M$,
we shall need the following result, which 
is the analogue on manifolds with bounded geometry of a 
well known result in the setting of spaces of homogeneous 
type in the sense of Coifman and Weiss \cite{CW}.
For the reader's convenience we sketch its proof, but omit
the details of the part which is 
very similar to the proof of \cite[Th\'eor\`eme~2.4]{CW}. 

We denote by $\cB_s$ the family 
of all balls with radius at most $s$. Given a ball $B$, we denote by $c_B$ its centre, by $r_B$
its radius and by $2B$ the ball with the same centre as $B$ and radius $2r_B$.    

\begin{proposition}  \label{rem: hom type}
Suppose that $\cT$ is a bounded operator on $\ld{M}$ 
and that
\begin{enumerate}
\item[\itemno1]
its Schwartz kernel $k_{\cT}$ is locally integrable 
in $(M\times M) \setminus \{(x,x): x \in M\}$,
and is supported in $\{(x,y)\in M\times M:
d(x,y) \leq 1\}$;
\item[\itemno2]
the following \emph{H\"ormander integral condition at scale $1$} holds
$$
A:=
\sup_{B \in \cB_1} \, \sup_{y\in B}  \, \, \int_{B(c_B,2) 
\setminus (2B)}
\mod{k_{\cT}(x,y) -  k_{\cT}(x,c_B)} \wrt \mu(x) 
< \infty.
$$
\end{enumerate}
Then $\cT$ extends to an operator of weak type $1$ and there exists a constant $C$ such that 
$$
\opnorm{\cT}{L^1(M);L^{1,\infty}(M)}\le\,CA.
$$
\end{proposition}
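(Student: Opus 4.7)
The plan is to adapt Coifman--Weiss's classical argument (\cite[Th\'eor\`eme~2.4]{CW}) to the locally doubling setting. The simplifying feature is that both the support of $k_\cT$ and the H\"ormander integral condition are confined to scale~$1$, so a \emph{local} Calder\'on--Zygmund decomposition at scale~$1$ suffices. Such a decomposition is built out of the local Hardy--Littlewood maximal function $Mf(x)=\sup_{0<r\leq 2}\mu(B(x,r))^{-1}\int_{B(x,r)}|f|\wrt\mu$, which is of weak type $(1,1)$ with a constant depending only on the local doubling constant of $\mu$. Fix $f\in L^1(M)$ and $\alpha>0$. Since $\mu(M)=\infty$, the sublevel set of $Mf$ is nonempty, and a Whitney-type covering of $\Omega_\alpha:=\{Mf>\alpha\}$, truncated at radius~$1$, yields balls $\{B_i\}$ with radii $r_i\leq 1$, with $\{2B_i\}$ of bounded overlap, and with each $B_i$ admitting a nearby point $x_i\in\Omega_\alpha^c$ whose containing dilate controls the average of $|f|$ on $B_i$. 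Writing $b_i=(f-f_{B_i})\chi_{B_i}$, $b=\sum_i b_i$, and $g=f-b$, one obtains the standard properties $\int b_i\wrt\mu=0$, $\mathrm{supp}(b_i)\subset B_i$, $\|g\|_\infty\leq C\alpha$, $\sum_i\|b_i\|_1\leq C\|f\|_1$, and $\sum_i\mu(B_i)\leq C\|f\|_1/\alpha$.

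With the decomposition in hand, I would split
$\mu\{|\cT f|>\alpha\}\leq\mu\{|\cT g|>\alpha/2\}+\mu\{|\cT b|>\alpha/2\}$.
The good part is controlled by Chebyshev and the $L^2$-boundedness of $\cT$: from $\|g\|_2^2\leq\|g\|_\infty\|g\|_1\leq C\alpha\|f\|_1$ one gets $\mu\{|\cT g|>\alpha/2\}\leq C\,\opnorm{\cT}{2}^2\,\|f\|_1/\alpha$. For the bad part, set $E=\bigcup_i 2B_i$; bounded overlap and local doubling give $\mu(E)\leq C\|f\|_1/\alpha$. The key observation on $E^c$ is that the combination of $r_i\leq 1$ and hypothesis \rmi\ forces $\cT b_i$ to be supported in $B(c_{B_i},2)$, because if $d(x,c_{B_i})>2$ and $y\in B_i$ then $d(x,y)>1$ and so $k_\cT(x,y)=0$. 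Using the mean-zero property of $b_i$ to subtract $k_\cT(x,c_{B_i})$, hypothesis \rmii\ yields
$$
\int_{E^c}|\cT b_i|\wrt\mu\leq\int_{B_i}|b_i(y)|\int_{B(c_{B_i},2)\setminus 2B_i}|k_\cT(x,y)-k_\cT(x,c_{B_i})|\wrt\mu(x)\wrt\mu(y)\leq A\,\|b_i\|_1.
$$
Summing over $i$ and invoking Chebyshev gives $\mu\{x\in E^c:|\cT b(x)|>\alpha/2\}\leq CA\,\|f\|_1/\alpha$; combining the three contributions produces the desired weak type $(1,1)$ bound, with constant proportional to $\opnorm{\cT}{2}^2+A$.

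I expect the main obstacle to be entirely bookkeeping: one must verify that the Whitney covering, the bounded overlap of $\{2B_i\}$, the estimate $\|g\|_\infty\leq C\alpha$, and the weak type $(1,1)$ of $M$ all have constants depending only on the local doubling constant of $\mu$ at a fixed bounded scale and on the uniform ball size condition (\ref{ubc}), and not on any global doubling assumption. Once this is done, the truncation of the Whitney decomposition at radius~$1$ is harmless, because $k_\cT$ vanishes at scale $>1$ and the enlarged balls $B(c_{B_i},2)$ match exactly the annuli over which the H\"ormander condition~\rmii\ is formulated. Apart from these points, the argument closely parallels \cite[Th\'eor\`eme~2.4]{CW}, which is presumably why the authors suggest sketching rather than fully reproducing the proof.
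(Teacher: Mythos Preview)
Your approach is a valid alternative to the paper's, but it differs in organisation and contains one gap that is not mere bookkeeping.

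\textbf{Comparison with the paper.}
The paper does \emph{not} perform a global truncated Calder\'on--Zygmund decomposition. Instead it first fixes a $1$-discretisation $\{z_j\}$ of $M$, writes $f=\sum_j f_j$ with $f_j$ supported in $B(z_j,1)$, and uses the bounded overlap of $\{B(z_j,2)\}$ (together with the fact that hypothesis~\rmi\ forces $\cT f_j$ to be supported in $B(z_j,2)$) to reduce the weak type $(1,1)$ estimate for $\cT f$ to a uniform weak type estimate for each $\cT f_j$. Each $f_j$ lives in a fixed ball of radius~$1$, which \emph{is} a space of homogeneous type with doubling constant independent of $j$; the classical Coifman--Weiss argument then applies verbatim inside that ball. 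Your approach bypasses the discretisation and runs a single global Calder\'on--Zygmund decomposition; this is perfectly reasonable, and indeed closer in spirit to the modern treatments, but the paper's route has the advantage that the truncation issue below simply never arises.

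\textbf{The gap.}
Your claim that ``each $B_i$ admits a nearby point $x_i\in\Omega_\alpha^c$'' is false for the Whitney balls that have been \emph{truncated} to radius~$1$: if the natural Whitney radius at $c_{B_i}$ exceeds~$1$, then $B_i$ lies well inside $\Omega_\alpha$ and no dilate of $B_i$ of comparable radius meets~$\Omega_\alpha^c$. Consequently there is no control on $f_{B_i}$, and the bound $\norm{g}{\infty}\le C\alpha$ fails as stated. The fix is exactly what your support remark already hints at: for a truncated ball with $r_i=1$ one has $2B_i=B(c_{B_i},2)$, so by hypothesis~\rmi\ the function $\cT b_i$ is supported in $2B_i\subset E$ regardless of whether $b_i$ has mean zero. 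Hence on those balls you may take $b_i=f\,\eta_i$ (with $\eta_i$ the partition-of-unity piece) without subtracting the mean, and set the corresponding contribution to $g$ equal to zero; then $\int_{E^c}\mod{\cT b_i}\wrt\mu=0$ trivially, the bound $\norm{g}{\infty}\le C\alpha$ holds because $g$ is now built only from $f\vert_{\Omega_\alpha^c}$ and averages over the non-truncated balls, and the remaining estimates $\sum_i\norm{b_i}{1}\le C\norm{f}{1}$ and $\mu(E)\le C\norm{f}{1}/\alpha$ follow from bounded overlap and the weak type of the local maximal function. With this modification your argument goes through.

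A minor remark: your final constant $C(\opnorm{\cT}{2}^2+A)$ is the honest outcome of the argument; the dependence on $\opnorm{\cT}{2}$ is implicit in the paper's constant $C$ as well, since the good-part estimate uses the $\ld{M}$ boundedness of~$\cT$.
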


\begin{proof}
Denote by $\fM$ a $1$-discretisation of $M$, i.e., a subset of $M$
that is maximal with respect to the following property:
$$
d(z_1,z_2) \geq 1 \quant z_1,z_2 \in \fM
\qquad\hbox{and} \qquad
d(x, \fM) \leq 1 \quant x \in M.
$$
We denote by $\{z_j: j \in \BN\}$
the points of $\fM$.  Since the measure $\mu$ is locally doubling,  the family $\{B(z_j,1): z_j \in \fM\}$ is a covering
of~$M$ such that $\{B(z_j,2): z_j \in \fM\}$ has the \emph{bounded overlap property}, i.e., 
there exists a positive integer $N_2$ such that 
$$
1\leq \sum_{j\in \BN} \One_{B(z_j,1)} \leq \sum_{j\in \BN} \One_{B(z_j,2)} \leq N_2,
$$
where $\One_E$ denotes the indicator function of the set $E$.
Given $f$ in $\lu{M}$ and a nonnegative integer $j$, we define $f_j$ by
$
f_j = f\, \One_{B(z_j,1)}/\sum_\ell \One_{B(z_\ell,1)}.
$
Then $f = \sum_{j\in \BN} f_j$, and 
$$
\cT f = \sum_{j\in \BN} \cT f_j.
$$
Note that this sum is locally uniformly finite, because the function $\cT f_j$ 
is supported in the ball~$B(z_j,2)$, by \rmi\ above, and 
the family $\{B(z_j,2): z_j \in \fM\}$ has the bounded overlap property. Then there exists a constant $C$ such that 
$$
\mu\bigl( \bigl\{x \in M: \bigmod{\cT f(x)} > s \bigr\} \bigr)
\leq C \, \sum_{j\in \BN} 
\mu\bigl( \bigl\{x \in M: \bigmod{\cT f_j(x)} > s/N_2 \bigr\} \bigr)
\quant s \in \BR^+.
$$
Thus, to conclude the proof it suffices to show that there exists a constant
$C$ such that 
\begin{equation} \label{f:  sum}
s\ \mu\bigl( \bigl\{x \in M: \bigmod{\cT f_j(x)} > s \bigr\} \bigr)
\leq C \, A \, {\norm{f_j}{1}}  \quant s \in \BR^+ \quant j \in \BN,
\end{equation}
for then we may conclude that 
$$
\begin{aligned}
s\ \mu\bigl( \bigl\{x \in M: \bigmod{\cT f(x)} > s \bigr\} \bigr)
& \leq C \, A\, \sum_{j\in \BN} \,  \norm{f_j}{1} \\
& \leq C \,  A\, \norm{f}{1} \qquad\forall s\in\BR^+.
\end{aligned}
$$
by the bounded overlap property, as required.   

To prove (\ref{f: sum}), we may follow the proof of the original
result of R.R. Coifman and G.~Weiss on spaces of 
homogeneous type \cite[Th\'eor\`eme~2.4]{CW}.  Define the 
\emph{local doubling constant} $D_2$~by 
$$
D_2 
= \sup_{B\in \cB_2}\, \frac{\mu(2B)}{\mu{(B)}}.
$$ 
Then, given $s$ in $\BR^+$, consider a Calder\'on--Zygmund
decomposition of $f_j$ at height $s$.  Note that, though $M$
need not be a space of homogeneous type, each $f_j$ is 
supported in a ball of radius $1$, and all these balls
are spaces of homogeneous type with doubling constant dominated by $D_2$.    
Thus, the constants appearing in the Calder\'on--Zygmund decompositions
of the functions $f_j$ depend on $D_2$, but not on $j$.  
Then the proof of (\ref{f: sum}) is 
exactly as in the setting of spaces of homogeneous type,
and the constant $C$ in (\ref{f: sum}) 
depends on $D_2$, but not on $s$ or $j$.  We omit the details. 
\end{proof}

Now we define an appropriate function space of holomorphic functions
which will be needed in the statement of Theorem~\ref{t: multiplier 2}.  
Then, for the reader's convenience, we recall 
one of its properties, which will be key in the proof of our main result. 

\begin{definition}  \label{d: Hormander at infinity}
Suppose that $J$ is a positive integer and that $W$
is in $\BR^+$.
We recall that $\bS_{W}$ denotes the strip $\{ \zeta \in \BC: \Im (\zeta)
\in (-W,W)\}$ and we denote by $H^\infty (\bS_{W};J)$ the vector space of 
all bounded \emph{even} holomorphic functions~$f$ in $\bS_{W}$ for which
there exists a positive constant $C$ such that
\begin{equation} \label{f: SsigmaJ}
\bigmod{D^j f(\zeta)} 
\leq {C} \, {(1+\mod{\zeta})^{-j}}
\quant \zeta\in \bS_{W} \quant j \in \{0,1,\ldots,J\}.
\end{equation}
The infimum of all constants $C$ for which (\ref{f: SsigmaJ}) holds
will be denoted by~$\norm{f}{\bS_{W};J}$.
\end{definition}

\begin{lemma}[{\cite[Lemma~5.4]{HMM}}] \label{l:splitf}
Suppose that $J$ is an integer $\geq 2$, and that $W$ is in $\BR^+$.
Then there exists a positive constant $C$ such that 
for every function $f$ in $H^\infty\bigl(\bS_{W};J\bigr)$, and
for every positive integer $h \leq J-2$
$$
\bigmod{\cO^h\widehat f (t)} 
\leq C\, \norm{f}{\bS_{W};J}\,  \mod{t}^{h-J} \, \e^{-W \mod{t}}
\quant t \in \BR\setminus\{0\}.
$$
\end{lemma}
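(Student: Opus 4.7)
The plan is to exploit the holomorphicity of $f$ on the strip $\bS_W$ to shift the contour of the Fourier integral, combined with the Paley--Wiener style observation that differentiation/multiplication by the variable exchange roles under $\cF$. The output is the factor $\e^{-W|t|}$ from the contour shift and the factor $|t|^{h-J}$ from a Fourier-dual of integration by parts.

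First I would rewrite $\cO^h\wh f$ as the Fourier transform of something. Using $D^h\wh f(t)=(-i)^h\wh{\xi^h f}(t)$ and $t^h\wh g(t)=(-i)^h\wh{g^{(h)}}(t)$, one obtains
$$
\cO^h\wh f(t) = t^h D^h \wh f(t) = (-1)^h \, \wh{(\xi^h f)^{(h)}}(t),
$$
and applying $t^{J-h}\wh{\,\cdot\,}=(-i)^{J-h}\wh{(\cdot)^{(J-h)}}$ once more yields
$$
t^{J-h}\cO^h\wh f(t) = (-1)^h (-i)^{J-h} \, \wh G(t), \qquad G(\xi):=(\xi^h f)^{(J)}(\xi).
$$
By Leibniz's rule $G=\sum_{k=0}^{h} \binom{J}{k}\tfrac{h!}{(h-k)!}\,\xi^{h-k} f^{(J-k)}(\xi)$, and the hypothesis $|D^j f(\zeta)|\le \norm{f}{\bS_W;J}(1+|\zeta|)^{-j}$ gives
$$
|G(\zeta)|\le C\,\norm{f}{\bS_W;J}\,(1+|\zeta|)^{h-J}\quant \zeta\in\bS_W,
$$
which, because $h\le J-2$, is integrable along every horizontal line in $\bS_W$, with integral bounded uniformly in the height.

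Next, I would estimate $\wh G$ by contour shifting. For $t>0$ and $0<W'<W$, applying Cauchy's theorem to the rectangle with vertices $\pm R,\pm R-iW'$ and letting $R\to\infty$ (the vertical segments vanish since $|G|$ decays like $|\xi|^{-2}$) produces
$$
\wh G(t) = \e^{-W' t}\int_{\BR} G(\xi-iW')\,\e^{-i\xi t}\wrt\xi,
$$
and the remaining integral is bounded by $C\,\norm{f}{\bS_W;J}$ with a constant independent of $W'\in(0,W)$. Letting $W'\to W^-$ gives $|\wh G(t)|\le C\,\norm{f}{\bS_W;J}\,\e^{-W t}$; the analogous contour shift upward handles $t<0$, so that $|\wh G(t)|\le C\,\norm{f}{\bS_W;J}\,\e^{-W|t|}$ on $\BR\setminus\{0\}$. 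Combining with Step~1 yields the claimed bound.

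The calculation is essentially bookkeeping (Leibniz plus a standard Paley--Wiener argument); the only point that requires a moment's care is extracting the sharp exponent $W$ rather than $W-\vep$. This comes from the fact that the pointwise bound $|G(\zeta)|\le C\norm{f}{\bS_W;J}(1+|\zeta|)^{h-J}$ holds throughout the open strip with a constant independent of the height, so the family of shifted integrals stays uniformly bounded as $W'\nearrow W$. The integrability condition $h\le J-2$ enters precisely here, to secure that $G$ is in $L^1$ of every horizontal line.
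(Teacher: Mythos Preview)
The paper does not supply its own proof of this lemma: it is quoted verbatim from \cite[Lemma~5.4]{HMM} and used as a black box in the estimate of the off-diagonal piece $\cB$. So there is no in-paper argument to compare against.

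Your argument is correct and self-contained. The two moves are exactly the right ones: (a) the algebraic identity $t^{J-h}\,\cO^h\wh f = c\,\wh G$ with $G=(\xi^h f)^{(J)}$, established via the standard Fourier intertwining relations and Leibniz, reduces matters to bounding $\wh G$; and (b) since the hypothesis gives $|G(\zeta)|\le C\,\norm{f}{\bS_W;J}(1+|\zeta|)^{h-J}$ uniformly on the open strip and $h-J\le -2$, the contour may be shifted to $\Im\zeta=-W'$ (respectively $+W'$) for any $W'<W$, producing $|\wh G(t)|\le C\,\norm{f}{\bS_W;J}\,\e^{-W'|t|}$ with $C$ independent of $W'$, whence the sharp exponent $W$ in the limit. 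Your remark that the integrability along horizontal lines is precisely what forces $h\le J-2$ is the crux and is well identified.

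One small technical point worth a sentence in a polished write-up: since $f$ is merely bounded on $\BR$, the identity $t^{J-h}\cO^h\wh f = c\,\wh G$ should first be read in $\cS'(\BR)$; because $G\in L^1(\BR)$, the right-hand side is a continuous function, and the equality then holds pointwise on $\BR\setminus\{0\}$, which is what the statement asserts. This is routine, and your argument goes through unchanged once this is said.
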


\begin{theorem} \label{t: multiplier 2}
Suppose that $M$ is a Riemannian manifold with bounded geometry,
and suppose that (\ref{f: assumption Ht}) holds for some $\rho >1/2$.
Assume that $\al$ and~$\be$ are as in (\ref{f: volume growth}),
and denote by $N$ the integer $[\!\![n/2+1]\!\!] + 1$.
Suppose that $J$ is an integer $> \max(N + 1, N+1+\al/2-\rho)$.
Then there exists a constant~$C$ such that
$$
\opnorm{m(\cD)}{\lu{M};\lorentz{1}{\infty}{M}} 
\leq C \,  \norm{m}{\bS_{\be;J}}
\quant m \in H^\infty (\bS_{\be};J).
$$
\end{theorem}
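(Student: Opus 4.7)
My plan is to follow Taylor's general strategy, adapted to the low-regularity setting: decompose the Schwartz kernel $k_{m(\cD)} = k_0 + k_\infty$, where $k_0 = k_{m(\cD)} \One_{\{d(x,y) \leq 1\}}$ and $k_\infty = k_{m(\cD)} - k_0$, and denote the corresponding operators by $\cT_0$ and $\cT_\infty$.  It suffices to verify the two hypotheses of Proposition~\ref{rem: hom type} for $\cT_0$ (the $L^2$ boundedness being a consequence of Proposition~\ref{p:kernest}~\rmi\ combined with the uniform ball size condition~(\ref{ubc})) and to show that $\cT_\infty$ extends to a bounded operator on $L^1(M)$; weak type~$1$ for $m(\cD)$ then follows by summing the two contributions.

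The preparatory step is to decompose $m$ dyadically in the spectral variable, $m = \sum_{j\in\BZ} m_j$ with $m_j = m \cdot \vp^{2^{-j}}$, so that each piece is supported where $\lambda \sim 2^j$, and to apply the Fourier inversion formula followed by Lemma~\ref{l: propertiesRiesz}~\rmii\ (with $v = \cD$) to obtain the representation
$$
m_j(\cD) = \frac{2^{k-1/2}}{2\sqrt{\pi}} \int_{\BR} R_k^* \widehat{m_j}(t)\, \cJ_{k-1/2}(t\cD) \wrt t,
$$
together with the analogue for $\cD_1$ via $m(\cD) = \widetilde{m}(\cD_1)$ with $\widetilde{m}(\mu) = m(\sqrt{\mu^2 - \kappa^2})$, which preserves the Mihlin--H\"ormander behaviour in a strip.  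Choosing $k = N$ makes $\cJ_{k-1/2}(\lambda) = O((1+\lambda)^{-k})$ fit the hypothesis of Proposition~\ref{p:kernest} with $\gamma > n/2+1$, yielding the $L^2$-in-$x$ bounds on $k_{\cJ_{k-1/2}(t\cD)}(\cdot,y)$ and on $\dest_2 k_{\cJ_{k-1/2}(t\cD_1)}(\cdot,y)$ that drive the rest of the argument.

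For the H\"ormander integral condition on $\cT_0$, fix $B \in \cB_1$ and $y \in B$.  Because $k_0$ is supported in $\{d(x,y) \leq 1\}$, the domain $B(c_B, 2) \setminus 2B$ has uniformly bounded measure by~(\ref{ubc}), and bounding $k_0(x,y) - k_0(x, c_B)$ via the fundamental theorem of calculus along a minimising geodesic reduces matters to an $L^1$-in-$x$ estimate for $\dest_2 k_{m_j(\cD_1)}$, which by Cauchy--Schwarz and the local volume bound is extracted from Proposition~\ref{p:kernest}~\rmiii.  For each $j$ one splits the $t$-integral at a scale comparable to $r_B^{-1}\,2^{-j}$: the small-$t$ r\'egime contributes the factor $t^{-n/2-1}(1+t)^{n/2+1-\rho}$ from Proposition~\ref{p:kernest}~\rmiii, the large-$t$ r\'egime is controlled by Lemma~\ref{l: technical}~\rmi\ combined with the exponential decay of $\widehat{m_j}$ afforded by Lemma~\ref{l:splitf}; summing in $j$, the assumption $J > N+1$ suffices.

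For $\cT_\infty$ one estimates $\sup_y \int_{d(x,y) > 1} |k_{m(\cD)}(x,y)| \wrt \mu(x)$ by decomposing into dyadic annuli $A_\ell = \{2^\ell \leq d(x,y) < 2^{\ell+1}\}$ and applying Cauchy--Schwarz with Proposition~\ref{p:kernest}~\rmi\ on each, balancing the factor $\mu(A_\ell)^{1/2} \lesssim 2^{\ell\al/2}\, \e^{\be 2^\ell}$ from~(\ref{f: volume growth}) against the exponential decay $|R_k^*\widehat{m_j}(t)| \lesssim \e^{-\be |t|}$ provided by Lemma~\ref{l:splitf}.  The hardest point is the joint tuning of the three dyadic scales---spectral ($j$), spatial ($\ell$), and temporal ($t$)---while preserving this exponential room: the hypothesis $\rho > 1/2$ is what makes the large-$t$ integrals converge after Proposition~\ref{p:kernest} is applied, and the threshold $J > N + 1 + \al/2 - \rho$ is dictated precisely by the polynomial loss $2^{\ell\al/2}$ that remains after the exponentials cancel in the $\ell$-summation of the off-diagonal piece.
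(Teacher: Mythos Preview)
Your outline follows the right overall architecture, but there is a genuine gap in the treatment of $\cT_\infty$.  You decompose $m = \sum_j m_j$ with $m_j = m\,\vp^{2^{-j}}$ and then invoke ``the exponential decay $\bigmod{R_k^*\widehat{m_j}(t)} \lesssim \e^{-\be\mod{t}}$ provided by Lemma~\ref{l:splitf}''.  But Lemma~\ref{l:splitf} applies to $\widehat m$, not to $\widehat{m_j}$: the cutoff $\vp^{2^{-j}}$ is smooth and compactly supported on $\BR$ but \emph{not analytic}, so $m_j$ does not extend holomorphically to any strip, and $\widehat{m_j}$ has only Schwartz (polynomial) decay in $t$.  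That is not enough to absorb the factor $\mu(A_\ell)^{1/2}\asymp 2^{\ell\al/2}\e^{\be 2^\ell}$ coming from the volume growth, and your sum over $\ell$ diverges.  Summing in $j$ first does not help, because after taking absolute values you cannot recombine $\sum_j \bigmod{R_k^*\widehat{m_j}}$ into $\bigmod{R_k^*\widehat m}$.

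The paper circumvents this by splitting on the \emph{Fourier side} rather than by a hard kernel cutoff: one writes $m(\cD)=\cA+\cB$ with $\cA=(\widehat\om\ast m)(\cD)$ and $\cB=(m-\widehat\om\ast m)(\cD)$, where $\om\in C_c^\infty([-1,1])$.  Then (a) $\cA$ has kernel automatically supported in $\{d(x,y)\le 1\}$ by finite propagation speed, because $\widehat\om\ast m$ is entire of exponential type $1$; the dyadic spectral decomposition is applied only to this piece, where exponential Fourier decay is not needed; (b) for $\cB$ one decomposes $\widehat m$ itself (in the $t$-variable) as $\sum_{j\ge 2}\om_j\widehat m$ with $\om_j$ supported near $\mod t\sim j$, so Lemma~\ref{l:splitf} applies directly to $\widehat m$ and yields the crucial $\e^{-\be\mod t}$ factor that cancels the volume growth.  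Two further side effects of your hard truncation deserve mention: your $\cT_0$ is not a spectral multiplier, so its $L^2$-boundedness does \emph{not} follow from Proposition~\ref{p:kernest}~\rmi\ as you claim (you would need to prove $\cT_\infty$ bounded on $L^2$ first and subtract); and the difference $k_0(x,y)-k_0(x,c_B)$ picks up boundary contributions from the set where exactly one of $d(x,y),\,d(x,c_B)$ exceeds $1$, which the fundamental-theorem-of-calculus argument does not see.
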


\begin{proof}
For notational convenience in this proof we shall write $\cJ$
instead of $\cJ_{N-1/2}$.

Denote by $\om$ an even function in 
$C_c^\infty(\BR)$ which is supported in $[-1,1]$, is equal to~1
in $[-1/4,1/4]$, and satisfies 
$$
\sum_{j\in \BZ} \om(t-j) = 1
\quant t \in \BR.
$$
Clearly $\wh\om\ast m$ and $m-\wh\om\ast m$ are bounded functions.
We follow the strategy of Taylor
(see \cite[Thm~1]{T}), and 
define the operators $\cA $ and $\cB $ spectrally~by
$$
\cA 
= (\wh\om\ast m) (\cD)
\qquad\hbox{and}\qquad
\cB 
= (m-\wh\om\ast m) (\cD).
$$
Then
$
m(\cD) 
= \cA  + \cB .
$
We shall prove that there exists a constant $C$ such that 
\begin{equation}\label{f: M1}
\opnorm{\cA }{L^1(M);L^{1,\infty}(M)}\le\, C\, \norm{m}{\bS_\beta;J}
\end{equation}
and
\begin{equation}\label{f: M2}
\opnorm{\cB }{L^1(M)}\le\, C\, \norm{m}{\bS_\beta;J}.
\end{equation}
These estimates clearly imply the desired conclusion.

First we analyse the operator $\cA $.  Since $\wh{\om}\ast m$
is an even entire function of exponential type $1$, the function
$A$, defined by 
$$
A (\zeta)
= (\wh{\om}\ast m)\bigl(\sqrt{\zeta^2-\kappa^2}\bigr) 
\quant \zeta \in \BC,
$$
is entire of exponential type $1$.  
The reason for introducing the new function~$A$ is to write~$\cA $ 
as a function of the operator $\cD_1$ (defined at the beginning
of Subsection~\ref{subsec: Kernel}) rather than of
the operator~$\cD$.  Observe~that 
$$
\cA  = A (\cD_1),
$$
and that the support of $k_{\cA }$ is contained in 
$\{(x,y)\in M\times M: d(x,y) \leq 1\}$.
It is straightforward to check that 
\begin{equation} \label{f: rel Horm}
\bignorm{A}{\Horm(J)}
\leq C\, \bignorm{\wh{\om}\ast m}{\Horm(J)},
\end{equation}
where the constant $C$ does not depend on $m$.  
By arguing much as in the proof of \cite[Proposition~5.3]{HMM},
we may show that the function~$\wh{\om}\ast m$ satisfies a 
Mihlin--H\"ormander condition of order $J$, with $\norm{\wh{\om}\ast m}{\Horm(J)}$ bounded by a constant times $\norm{m}{\Horm(J)}$.  Furthermore,
it is clear that 
$$
\norm{m}{\Horm(J)}
\leq C\, \norm{m}{\bS_{\be};J},
$$
with $C$ independent of $m$.
In view of this observation and of Proposition~\ref{rem: hom type},
to prove that $\cA $ is of weak type $1$,
with the required norm estimate,
it suffices prove that its integral kernel $k_{\cA }$ satisfies
the following
\begin{equation} \label{f: LHIC}
\sup_{y\in B \in \cB_1} \, \int_{B(c_B,2) \setminus (2B)}
\mod{k_{\cA }(x,y) -  k_{\cA }(x,c_B)} \wrt \mu(x) 
\leq C\, \norm{A}{\Horm(J)}.
\end{equation}\par
To prove (\ref{f: LHIC})  
we further decompose the function $A$, and then decompose the 
operator~$\cA $ accordingly.
For all $j$ in $\BZ$
define the functions $A_j$ by
$$
A_j = A\, \vp^{2^{-j}},
$$
where $\vp$ is defined just above Lemma~\ref{l: technical}.
Then $\widehat{A}_j$ is an entire function of exponential type and 
$\widehat{A} = \sum_j \widehat{A}_j$ in the sense of distributions.
Furthermore, Lemma~\ref{l: technical} (with $A$ 
in place of $g$) and Remark~\ref{rem: technical} imply that  
for every $\eta$ in $(1/2,1]$
there exists a constant~$C$ such that for every $j$ in $\BZ$ and
for every $\ell$ in $\{0,1,\ldots,N\}$ \begin{align}
\int_{\mod{t}>r} \bigmod{R_N^* \wh{A}_j(t)} \wrt t 
&\leq  C\, \norm{A}{\Horm(J)} \, (2^j\, r)^{-\vep}
\qquad\forall r\in \BR^+\ ,\label{(a)} \\
r\, \int_{\mod{t} > r} \frac{\bigmod{R_N^* \wh{A}_j(t)}}{\mod{t}^\eta} \wrt t 
&\leq C  \, \norm{A}{\Horm(J)}\,  (2^j\, r)^{1/2}
\qquad\forall r\in(0,2].
 \label{(b)}
\end{align}
Here we have used the fact that $J>N+1/2$.

For each ball $B$ in $\cB_1$ and for each integer $j$, define $I_j(B)$ by 
$$
I_j(B)
= \sup_{y\in B} \int_{E_B} 
      \bigmod{k_{A_j(\cD_1)}(x,y)
       - k_{A_j(\cD_1)}(x,c_B)} \wrt \mu(x),
$$
where, for notational convenience, we write $E_B$ instead of 
$B(c_B,2) \setminus (2B)$.
To prove (\ref{f: LHIC}), it suffices to show that 
there exists a constant~$C$ such that for
$$
I_j(B)
\leq C\, \norm{A}{\Horm(J)}\, 
     \min \bigl((2^j \, r_B)^{-\vep}, (2^j \, r_B)^{1/2}\bigr)
\quant B \in \cB_1 \quant j \in \BZ.
$$
To prove this, we shall prove separately that
\begin{equation} \label{f: LHIC V}
I_j(B)
\leq C\, \norm{A}{\Horm(J)}\, (2^j \, r_B)^{-\vep},
\end{equation}
and that 
\begin{equation} \label{f: LHIC VI}
I_j(B)
\leq C\, \norm{A}{\Horm(J)}\, (2^j \, r_B)^{1/2}.
\end{equation}
The key formula here
is 
$$
A_j (\la)
= \frac{2^{N-1}}{\sqrt{2\pi}} \,
\ir R_N^* \wh{A}_j(t) \,\,  \cJ (t \la) \wrt t,
$$
which follows from the Fourier inversion formula and 
Lemma~\ref{l: propertiesRiesz}~\rmii, and its consequence
$$
k_{A_j(\cD_1)}
= \frac{2^{N-1}}{\sqrt{2\pi}} \,
\ir R_N^*\wh{A}_j(t) \,\,  k_{\cJ (t \cD_1)} \wrt t.
$$
Note that the modified Bessel function $\lambda\mapsto \cJ (t\lambda)$ is an even entire function of exponential type $\mod{t}$. 
Thus, by the 
finite propagation speed property for $\cL$, the kernel $k_{\cJ (t\cD_1)}(\cdot,y)$ vanishes outside the ball $B(y,\mod{t})$.\par
To prove (\ref{f: LHIC V}), note that 
$$
\begin{aligned}
I_j(B)
& \leq 2 \, \sup_{y\in B} \,  
  \bignorm{k_{A_j(\cD_1)}(\cdot,y)}{\lu{E_B}} \\
& \leq C \, \sup_{y\in B} \, \int_{\mod{t} \geq r_B}
      \bigmod{R_N^*\wh{A}_j(t)} \, \bignorm{k_{\cJ (t\cD_1)}
      (\cdot,y)}{\lu{E_B}} \wrt t.
\end{aligned}
$$
Now we split each of these integrals
as the sum of the corresponding 
integrals over the sets $\{t \in \BR: r_B\leq \mod{t} \leq 1\}$
and $\{t \in \BR: \mod{t} > 1\}$.  We denote these two integrals
by $\Upsilon_1$ and $\Upsilon_2$ respectively.  They depend on 
$y$ in $B$ and $j$.
\par
By the asymptotics of Bessel functions \cite[formula~(5.11.6), p.122]{L} 
$$
\sup_{s>0} \mod{(1+s)^{N} \, \cJ  (s)} < \infty,
$$
so that $\cJ$ satisfies the assumptions of 
Proposition~\ref{p:kernest} (with $N$ in place of $\ga$).
Hence by Schwarz's inequality, (\ref{ubc}) and Proposition~\ref{p:kernest}~\rmii\ 
$$
\begin{aligned}
\bignorm{k_{\cJ (t\cD_1)}(\cdot,y)}{\lu{B(c_B,2)}}
& \leq \mu\bigl(B(y,\mod{t})\bigr)^{1/2} \, 
    \bignorm{k_{\cJ (t\cD_1)}(\cdot,y)}{\ld{B(y,\mod{t})}} \\
& \leq C  \quant t \in [-1,1]\setminus\set{0}.
\end{aligned}
$$
As a consequence
\begin{align*}
\Upsilon_1&  \leq C \, \int_{r_B\leq \mod{t} \leq 1} \bigmod{R_N^*\wh{A}_j(t)}  \wrt t  \\ 
&\leq C  \, \norm{A}{\Horm(J)}\, (2^j\, r_B)^{-\vep}. \\ 
\end{align*}
Note that we have used (\ref{(a)}) above in the last inequality.
\par
To estimate $\Upsilon_2$ we argue similarly, 
using Proposition~\ref{p:kernest}~\rmi\  and the fact that $\mu(E_B)\le \mu\big(B(y,3)\big)\le C$ by (\ref{ubc}). Thus we obtain
\begin{align*}
\Upsilon_2&  \leq C \, \int_{\mod{t} > 1} \bigmod{R_N^*\wh{A}_j(t)} 
     \mod{t}^{-\rho}  \wrt t  \\ 
&\leq C \, \int_{\mod{t} \geq r_B} \bigmod{R_N^*\wh{A}_j(t)} \wrt t   \\
& \leq C  \, \norm{A}{\Horm(J)}\, (2^j\, r_B)^{-\vep}.
\end{align*}
Then (\ref{f: LHIC V}) follows.

To prove (\ref{f: LHIC VI}), observe that 
\begin{equation} \label{f: second estimate I}
\begin{aligned}
I_j(B)
& \leq C\, r_B \, \sup_{y\in B} \int_{E_B} 
      \bigmod{\dest_2 k_{A_j(\cD_1)}(x,y)} \wrt \mu(x) \\
& \leq C \, r_B \, \sup_{y\in B} \, \int_{\mod{t} \geq r_B}
      \bigmod{R_N^*\wh{A}_j(t)} \, \bignorm{\dest_2 
      k_{\cJ (t\cD_1)}(\cdot,y)}{\lu{E_B}} \wrt t.
\end{aligned}
\end{equation}
Much as before, we split each of these integrals
as the sum of the integrals over the sets $\{t \in \BR: r_B
\leq\mod{t} < 1\}$
and $\{t \in \BR: \mod{t} \geq 1\}$, and denote them by $\wt\Upsilon_1$
and $\wt\Upsilon_2$.

By the finite propagation speed for $\opL$, the kernel
${\dest_2 k_{\cJ (t\cD_1)}(\cdot,y)}$ 
vanishes outside the ball $B(y,\mod{t})$.  Hence
by Schwarz's inequality and Proposition~\ref{p:kernest}~\rmiii\ 
$$
\begin{aligned}
\bignorm{{\dest_2 k_{\cJ (t\cD_1)}(\cdot,y)}}{\lu{B(c_B,2)}}
& \leq \mu\bigl(B(y,\mod{t})\bigr)^{1/2} \, 
    \bignorm{{\dest_2 k_{\cJ (t\cD_1)}(\cdot,y)}}{\ld{B(y,\mod{t})}} \\
& \leq C \, \mod{t}^{n/2} \, \mod{t}^{-n/2-1} \quant t \in [-1,1]\setminus\set{0}
\end{aligned}
$$
and
$$
\begin{aligned}
\bignorm{{\dest_2 k_{\cJ (t\cD_1)}(\cdot,y)}}{\lu{B(c_B,2)}}
& \leq \mu\bigl(B(c_B,2)\bigr)^{1/2} \, 
    \bignorm{{\dest_2 k_{\cJ (t\cD_1)}(\cdot,y)}}{2} \\
& \leq C \, \mod{t}^{-\rho} \quant t \in \BR\setminus[-1,1].
\end{aligned}
$$
Then, by (\ref{(b)}),
$$
\wt\Upsilon_1
  \leq C \, \int_{r_B \leq\mod{t} \leq 1} \frac{\bigmod{R_N^*\wh{A}_j(t)}}{\mod{t}}
     \wrt t  
  \leq C  \, \norm{A}{\Horm(J)}\, (2^j/ r_B)^{1/2},
$$
and
$$
\wt\Upsilon_2
  \leq C \, \int_{\mod{t} > 1} \bigmod{R_N^*\wh{A}_j(t)} 
     \mod{t}^{-\rho}  \wrt t  
  \leq C  \, \norm{A}{\Horm(J)}\, (2^j/ r_B)^{1/2}.
$$
The required estimate (\ref{f: LHIC VI}) follows
from this and (\ref{f: second estimate I}).
This concludes the proof of (\ref{f: LHIC}),  hence of (\ref{f: M1}).
\par
Next we estimate $\opnorm{\cB }{L^1(M)}$.
For each $j$ in $\{2,3,\ldots,\}$, define $\om_j$ by the formula
$$
\om_j(t) = \om (t-j+1) + \om(t+j-1) \quant t \in \BR.
$$
Observe that $\cF (m-\wh\om\ast m) = \sum_{j=2}^\infty \om_j \, \wh m$.  Since
$m$ is in $H^\infty\bigl(\bS_{\be};J\bigr)$ and $J \geq N+2$, 
the function $\wh m$ and its derivatives up to the order $N$ are 
rapidly decreasing at infinity 
by Lemma~\ref{l:splitf}, so that 
$\cO^h\bigl((1-\om)\, \wh m\bigr)$ is in $\lu{\BR}\cap C_0(\BR^+)$ 
for all $h$ in $\{0,\ldots, N\}$.
Hence we may use 
Lemma~\ref{l: propertiesRiesz}~\rmii\ and write 
$$
\begin{aligned}
(m-\wh\om\ast m)(\la)
& = \frac{1}{2\pi} \, \sum_{j=2}^\infty \ir  \om_j(t) \, \wh{m}(t)
      \, \cos (t\la) \wrt t \\ 
& = \frac{2^{N-1}}{\sqrt{2\pi}} \,\sum_{j=2}^\infty   
  \ir  R_N^*\bigl(\om_j\, \wh m\bigr)(t)\, \cJ  (t \la)  \wrt t,
\end{aligned}
$$
for all $\lambda$ in $\BR$.  Define the kernel 
$k_{\cB }^j$ by 
$$
k_{\cB }^j 
= \frac{2^{N-1}}{\sqrt{2\pi}} \,
  \ir  R_N^*\bigl(\om_j\, \wh m\bigr)(t)\, k_{\cJ  (t \cD)}   \wrt t.
$$
Then, at least formally,
$
k_{\cB } 
=  \sum_{j=2}^\infty  k_{\cB }^j.
$
Note that 
$k_{\cB }^j$ is supported in 
$\{(x,y) \in M\times M: d(x,y) \leq j\}$ by finite propagation speed.
For all positive integer $\ell$ 
and for each $p$ in $M$, denote by $A(p,\ell)$ the annulus
with centre~$p$ and radii $\ell-1$ and $\ell$. 
Fix $y$ in $M$.  Then
$$
\begin{aligned}
\bignorm{k_{\cB } (\cdot,y)}{1}
&  =    \sum_{\ell = 1}^\infty \int_{A(y,\ell)} \bigmod{k_{\cB } (x,y)} 
     \wrt \mu(x)\\
&  \leq \sum_{\ell = 1}^\infty \mu\bigl(B(y,\ell)\bigr)^{1/2} \,
      \Bigl[\int_{A(y,\ell)} \bigmod{k_{\cB } (x,y)}^2  \wrt \mu(x)
      \Bigr]^{1/2}\\
\end{aligned}
$$
Note that if $j \leq \ell-1$, then the restriction of $k_{\cB }^j$
to $A(y,\ell)$ vanishes, because $k_{\cB }^j(\cdot,y)$ is supported in the 
ball $B(y,j)$.  
Thus, by Schwarz's inequality 
$$
\begin{aligned}
\bignorm{k_{\cB } (\cdot,y)}{1}
&  \leq \sum_{\ell = 1}^\infty \mu\bigl(B(y,\ell)\bigr)^{1/2} \,
      \sum_{j=\ell}^\infty \bignorm{k_{\cB }^j (\cdot,y)}{2}\\
&  \leq C\, \sum_{\ell = 1}^\infty\ell^{\al/2} 
      \, \e^{\be \ell} \, \sum_{j=\ell}^\infty 
      \ir \bigmod{R_N^*\bigl(\om_j\, \wh m\bigr)(t)} 
      \, \bignorm{k_{\cJ  (t \cD)}(\cdot,y)}{2}   \wrt t,
\end{aligned}
$$
where we have used (\ref{f: volume growth}) and the formula above for $k_{\cB }^j$.
Recall that $N-1/2>(n+1)/2$.  Then,
by Proposition~\ref{p:kernest}~\rmi\ there exists a constant $C$ such that 
$$
\sup_{y\in M}  \bignorm{k_{\cJ (t\cD)}(\cdot,y)}{2}
\leq C\,  \mod{t}^{-n/2}\, \bigl(1+\mod{t}\bigr)^{n/2-\rho} \qquad\forall t\in\BR^+. 
$$
Furthermore, by Lemma~\ref{l:splitf}, there exists a constant $C$ such that 
for $h$ in $\{0, \ldots, N\}$
$$
\bigmod{\cO^h(\om_j\, \wh m)(t)}
\leq C \, \norm{m}{\bS_{\be};J}\, 
\e^{-\be \mod{t}} \, \mod{t}^{h-J} \,\quant t \in \BR\setminus\set{0};
$$
here we have used the fact that $J\geq N+2$.
Since $\cO^h(\omega_j\widehat{m})$ vanishes in $[2-j,j-2]$,
$$
\begin{aligned}
\sup_{y\in M} \bignorm{k_{\cB } (\cdot,y)}{1}
&  \leq C\, \norm{m}{\bS_{\be};J}\,\sum_{\ell = 1}^\infty \ell^{\al/2} 
      \, \e^{\be \ell} \, 
      \sum_{j=\ell}^\infty \int_{\mod{t} \geq j-2} 
      \e^{-\be \mod{t}} \, \mod{t}^{N-J-\rho}  \wrt t \\
&  \leq C\, \norm{m}{\bS_{\be};J}\, \sum_{\ell = 1}^\infty \ell^{\al/2+N-J-\rho}.
\end{aligned}
$$
The series above is convergent, because $\al/2+N-J-\rho<-1$ by assumption, \break so
that 
$$
\sup_{y\in M} \bignorm{k_{\cB } (\cdot,y)}{1}
\leq C\, \norm{m}{\bS_{\be};J}.  
$$
Since $k_{\cB }(x,y) = \OV{k_{\cB }(y,x)}$, 
$$
\sup_{x\in M} \bignorm{k_{\cB } (x,\cdot)}{1}
\leq C\, \norm{m}{\bS_{\be};J}.  
$$
Hence $\cB $ maps $\lu{M}$ into $\lu{M}$, with operator norm
bounded by $C \, \norm{m}{\bS_{\be};J}$, as required to to prove (\ref{f: M2}).
\par 
The proof of the theorem is complete. 
\end{proof}

\end{document}